%%%%%%%%%%%%%%%%%%%%%%% file template.tex %%%%%%%%%%%%%%%%%%%%%%%%%
%
% This is a general template file for the LaTeX package SVJour3
% for Springer journals.          Springer Heidelberg 2010/09/16
%
% Copy it to a new file with a new name and use it as the basis
% for your article. Delete % signs as needed.
%
% This template includes a few options for different layouts and
% content for various journals. Please consult a previous issue of
% your journal as needed.
%
%%%%%%%%%%%%%%%%%%%%%%%%%%%%%%%%%%%%%%%%%%%%%%%%%%%%%%%%%%%%%%%%%%%
%
% First comes an example EPS file -- just ignore it and
% proceed on the \documentclass line
% your LaTeX will extract the file if required

% \begin{filecontents*}{example.eps}
% %!PS-Adobe-3.0 EPSF-3.0
% %%BoundingBox: 19 19 221 221
% %%CreationDate: Mon Sep 29 1997
% %%Creator: programmed by hand (JK)
% %%EndComments
% gsave
% newpath
%   20 20 moveto
%   20 220 lineto
%   220 220 lineto
%   220 20 lineto
% closepath
% 2 setlinewidth
% gsave
%   .4 setgray fill
% grestore
% stroke
% grestore
% \end{filecontents*}

%
\RequirePackage{fix-cm}
\documentclass[smallextended]{svjour3}       % onecolumn (second format)
\smartqed  % flush right qed marks, e.g. at end of proof
\usepackage{float}%
\usepackage{graphicx}
\usepackage{multirow}%
\usepackage{amsmath,amssymb,amsfonts}%
\usepackage{xcolor}%
\usepackage{textcomp}%
\usepackage{manyfoot}%
\usepackage{booktabs}%
\usepackage{algorithm}%
\usepackage{algorithmicx}%
\usepackage{algpseudocode}%
\usepackage{listings}%
\usepackage{threeparttable} 
\usepackage{tablefootnote}
\usepackage{adjustbox}
\usepackage{hyperref}

\newtheorem{assumption}{Assumption}

%
% \usepackage{mathptmx}      % use Times fonts if available on your TeX system
%
% insert here the call for the packages your document requires
%\usepackage{latexsym}
% etc.
%
% please place your own definitions here and don't use \def but
% \newcommand{}{}

\newcommand{\hcT}{{\widehat { \cal T}}}

\newcommand\hcF{{\widehat { \cal F}}}

\newcommand\tcF{\widetilde{\mathcal{F}}}

\newcommand\cM{{\cal M}}
\newcommand\cG{{\cal G}}

\newcommand\cT{{\cal T}}

\newcommand\cI{{\cal I}}

\newcommand\cJ{{\cal J}}

\newcommand\by{{\bar y}}
\newcommand\bz{{\bar z}}
\newcommand\bx{{\bar x}}

\newcommand\bw{{\bar w}}

\newcommand{\bX}{{\mathbb X}}

%
% Insert the name of "your journal" with
% \journalname{myjournal}
%
\begin{document}

\title{Peaceman-Rachford Splitting Method Converges Ergodically for Solving Convex Optimization Problems %\thanks{Grants or other notes
%about the article that should go on the front page should be
%placed here. General acknowledgments should be placed at the end of the article.}
}
% \subtitle{Do you have a subtitle?\\ If so, write it here}

\titlerunning{Ergodical Peaceman-Rachford Splitting Method}        % if too long for running head

\author{Kaihuang Chen         \and
        Defeng Sun \and  Yancheng Yuan \and Guojun Zhang \and Xinyuan Zhao %etc.
}

%\authorrunning{Short form of author list} % if too long for running head

% \institute{Defeng Sun \at
%               first address \\
%               Tel.: +123-45-678910\\
%               Fax: +123-45-678910\\
%               \email{fauthor@example.com}           %  \\
% %             \emph{Present address:} of F. Author  %  if needed
%            \and
%            S. Author \at
%               second address
% }

\institute{Kaihuang Chen \at
              Department of Applied Mathematics, The Hong Kong Polytechnic University, Hung Hom, Hong Kong \\
              \email{kaihuang.chen@connect.polyu.hk}           %  \\
           \and
               Defeng Sun  \at
                Corresponding author, Department of Applied Mathematics and Research Center
for Intelligent Operations Research, The Hong Kong Polytechnic University, Hung Hom, Hong Kong\\
              \email{defeng.sun@polyu.edu.hk}
        \and      Yancheng Yuan \at
               Department of Data Science and Artificial Intelligence,  The Hong Kong Polytechnic University, Hung Hom, Hong Kong \\
              \email{yancheng.yuan@polyu.edu.hk}
        \and
            Guojun Zhang \at
              Department of Applied Mathematics, The Hong Kong Polytechnic University, Hung Hom, Hong Kong \\
              \email{guojun.zhang@connect.polyu.hk}
         \and
            Xinyuan Zhao \at
              Department of Mathematics, Beijing University of Technology, Beijing, P.R. China \\
              \email{xyzhao@bjut.edu.cn}
}

\date{Received: date / Accepted: date}
% The correct dates will be entered by the editor

\maketitle

\begin{abstract}

In this paper, we prove that the ergodic sequence generated by the Peaceman-Rachford (PR) splitting method with semi-proximal terms converges for convex optimization problems (COPs). Numerical experiments on the linear programming benchmark dataset further demonstrate that, with a restart strategy, the ergodic sequence of the PR splitting method with semi-proximal terms consistently outperforms both the point-wise and ergodic sequences of the Douglas-Rachford (DR) splitting method. These findings indicate that the restarted ergodic PR splitting method is a more effective choice for tackling large-scale COPs compared to its DR counterparts.

\keywords{Peaceman-Rachford splitting method \and Ergodic convergence \and Degenerate proximal point algorithm \and 
Preconditioned ADMM}% \PACS{PACS code1 \and PACS code2 \and more}

\subclass{90C05 \and 90C06 \and 90C25 }
\end{abstract}

\section{Introduction}

Let $\mathbb{U}$, $\mathbb{W}$, $\mathbb{X}$, $\mathbb{Y}$, and $\mathbb{Z}$ be finite-dimensional real Hilbert spaces, each equipped with an inner product $\langle \cdot, \cdot \rangle$ and its corresponding norm $\|\cdot\|$. In this paper, we focus on the ergodic convergence of the Peaceman-Rachford (PR) splitting method for solving the following convex optimization problem (COP):
\begin{equation}\label{primal}
	\begin{array}{cc}
		\displaystyle\min _{y \in \mathbb{Y} , z \in \mathbb{Z}} & f_{1}(y) + f_{2}(z) \\
		\text{s.t.} & {B}_{1}y + {B}_{2}z = c,
	\end{array}
\end{equation}
where $f_1: \mathbb{Y} \to (-\infty, +\infty]$ and $f_2: \mathbb{Z} \to (-\infty, +\infty]$ are proper closed convex functions, $B_1: \mathbb{Y} \to \mathbb{X}$ and $B_2: \mathbb{Z} \to \mathbb{X}$ are given linear operators, and $c \in \mathbb{X}$ is a given vector. Let  $\sigma > 0$ be a given positive parameter. The augmented Lagrangian function for problem \eqref{primal} is defined, for any $(y, z, x) \in \mathbb{Y} \times \mathbb{Z} \times \mathbb{X}$, as
\[
L_{\sigma}(y, z; x) := f_1(y) + f_2(z) + \langle x, B_1 y + B_2 z - c \rangle + \frac{\sigma}{2} \|B_1 y + B_2 z - c\|^2.\]
The dual of  problem \eqref{primal} is
	\begin{equation}\label{dual}
		\max _{x \in \mathbb{X}}\left\{-f_{1}^{*}(-B_{1}^* x)-f_{2}^{*}(-B_{2}^* x)-\langle c, x\rangle\right\}.
	\end{equation}  
Under mild constraint qualifications \cite[Theorem 23.8]{rockafellar1970convex}, solving the dual problem \eqref{dual} is equivalent to solving the following monotone inclusion problem (MIP):
\begin{equation}\label{model:A1+A2}
  0 \in(\cG_1+\cG_2) x,  \quad x \in \mathbb{X},
\end{equation}
where $\cG_1 (x):= \partial\left(f_1^* \circ\left(-B_1^*\right)\right)(x)+c \ \  \text {and}\ \ \cG_2(x):=\partial(f_2^* \circ(-B_2^*))(x).
$
For any given maximal monotone operator $\cG: \mathbb{X}  \rightarrow 2^{\mathbb{X}}$,  we denote its resolvent by $\cJ_{\cG}:=(\cI+\cG)^{-1}$, where $\cI$ is the identity operator. The relaxed Douglas-Rachford (DR) splitting method with the relaxation factor $\rho \in (0,2]$ \cite{lions1979splitting} for solving problem \eqref{model:A1+A2} is given by
\begin{equation}\label{alg:relaxed-DR}
 u^{k+1}=\hcF^{\rm DR}_{\rho}(u^{k}):= \Big(\cI+\rho \big(\cJ_{\sigma\cG_1}\circ(2\cJ_{\sigma\cG_2}-\cI)-\cJ_{\sigma\cG_2} \big) \Big) u^k,   \ k=0,1,\ldots, 
\end{equation}
where $ u^0 \in \mathbb{X}$ is an initial point. This is equivalent to the following iterative scheme: for any $k\geq 0$,
\begin{equation*}
\left\{\begin{array}{l}
\bx_1^{k}=\cJ_{\sigma \cG_2}( u^k), \\  \bx_2^{k}=\cJ_{\sigma \cG_1}(2 \bx_1^{k}-u^k),
 \\
u^{k+1}=u^k+\rho(\bx_2^{k}-\bx_1^{k}).
\end{array}\right.
\end{equation*}
The case $\rho = 2$ is referred to as the PR splitting method, while $\rho = 1$ corresponds to the widely studied DR splitting method.

Building on the relaxed DR splitting method, Eckstein and Bertsekas \cite{eckstein1992douglas} introduced the generalized alternating direction method of multipliers (GADMM) for solving the COP \eqref{primal}. Later, in extending the semi-proximal ADMM (sPADMM) framework with larger dual step lengths studied in \cite{fazel2013hankel} to encompass the GADMM, Xiao et al. \cite{xiao2018generalized} proposed a  preconditioned ADMM (pADMM) with semi-proximal terms (or degenerate proximal ADMM) for solving the COP \eqref{primal}, as described in Algorithm \ref{alg:pADMM}.
 \begin{algorithm}[htp] 
	\caption{A pADMM method for solving COP \eqref{primal}}
	\label{alg:pADMM}
	\begin{algorithmic}[1]
		\State {Input: Let $\cT_1$ and $\cT_2$ be two self-adjoint, positive semidefinite linear operators on $\mathbb{Y}$ and $\mathbb{Z}$, respectively. Denote $w=(y,z,x)$ and $\bw=(\by,\bz,\bx)$. Select an initial point $w^{0} = (y^{0}, z^{0}, x^{0}) \in \operatorname{dom}(f_1)\times \operatorname{dom}(f_2) \times \mathbb{X}$. Set the parameters $\sigma > 0$ and $\rho\in (0,2]$.} 
    \For{$k=0,1,...,$ \vspace{3pt}}
		\State {Step 1. $\bz^{k}=\underset{z \in \mathbb{Z}}{\arg \min }\left\{L_\sigma\left(y^k, z ; x^k\right)+\frac{1}{2}\|z-z^{k}\|_{\mathcal{T}_2}^2\right\}$;}
		\State{Step 2. $\bx^{k}={x}^k+\sigma (B_{1}{y}^{k}+B_{2}\bz^{k}-c) $;}
		\State {Step 3. $\by^{k}=\underset{y \in \mathbb{Y}}{\arg \min }\left\{L_\sigma\left(y, \bz^{k} ; \bx^{k}\right)+\frac{1}{2}\|y-y^{k}\|_{\mathcal{T}_1}^2\right \}$;} 
        \State {Step 4. $w^{k+1}= (1-\rho){w}^{k}+\rho\bw^k$;}
        \EndFor 
        \vspace{3pt} 	
	\end{algorithmic}
\end{algorithm}

When \(\cT_1 = 0\) and \(\cT_2 = 0\), the pADMM with $\rho = 2$ reduces to the PR splitting method \cite{eckstein1992douglas,xiao2018generalized}, while the pADMM with $\rho = 1$ corresponds to the DR splitting method \cite{gabay1983chapter}. For general positive semidefinite linear operators $\mathcal{T}_1$ and $\mathcal{T}_2$, we refer to the pADMM as the PR splitting method with semi-proximal terms when $\rho = 2$, or as the DR splitting method with semi-proximal terms when $\rho = 1$. These semi-proximal terms are crucial for simplifying subproblem solutions and improving the scalability of the PR and DR splitting methods, especially for large-scale COPs \eqref{primal}.

It has long been observed that the PR splitting method is typically faster than the DR splitting method whenever it converges point-wisely \cite{gabay1983chapter,lions1979splitting}. Unfortunately, since $\hcF^{\rm DR}_{2}$ defined in \eqref{alg:relaxed-DR} is merely a non-expansive operator, the PR splitting method does not necessarily converge point-wisely for general MIPs \eqref{model:A1+A2} (see e.g., \cite{monteiro2018complexity}). Consequently, previous studies have primarily focused on studying the point-wise convergence of the PR splitting method under strong monotonicity assumptions on $\cG_1$ or $\cG_2$ \cite{dong2010family,giselsson2016linear,monteiro2018complexity,sim2023convergence}. However, these strong monotonicity conditions are rarely met in practice, particularly for important problem classes like COPs \eqref{primal}, which significantly limit the practical applicability of the PR splitting method. Moreover, motivated by the impressive performance of the restarted ergodic primal-dual hybrid gradient method for linear programming (LP) \cite{applegate2021practical}, we evaluate the restarted ergodic PR splitting method with semi-proximal terms on LP instances (see Section \ref{sec:4}). Numerical results show that it significantly outperforms both the point-wise and ergodic sequences of the DR splitting method with semi-proximal terms.

 Let the sequences \(\{w^t =(y^t, z^t, x^t)\}\) and \(\{\bar{w}^t=(\bar{y}^t, \bar{z}^t, \bar{x}^t)\}\) be generated by the pADMM in Algorithm \ref{alg:pADMM}. Without imposing any strong monotonicity assumption, here we focus on   analyzing the convergence of    the  two ergodic sequences \(\{w^k_a =(y^k_a, z^k_a, x^k_a) \} \) and \(\{\bar{w}^k_a= (\bar{y}^k_a, \bar{z}^k_a, \bar{x}^k_a)\} \), where for each $k\geq 0$, 
 \[  (y^k_a, z^k_a, x^k_a): = \frac{1}{k+1} \sum_{t=0}^{k} (y^t, z^t, x^t)\ {\rm and}  \ (\bar{y}^k_a, \bar{z}^k_a, \bar{x}^k_a):=    \frac{1}{k+1} \sum_{t=0}^{k} (\bar{y}^t , \bar{z}^t , \bar{x}^t ).\]
By applying Baillon's non-linear ergodic theorem \cite{baillon1975theoreme} to the non-expansive operator \(\hcF^{\rm DR}_{2}\) in \eqref{alg:relaxed-DR}, the convergence of the ergodic sequence $\{u^k_a := \frac{1}{k+1}\sum_{t=0}^{k} u^t\}$ can be directly obtained in the PR splitting method to solve MIP \eqref{model:A1+A2}. Furthermore, the continuity of the resolvent operator \(\cJ_{\sigma\cG_2}(\cdot)\) ensures the convergence of the sequence \(\{\cJ_{\sigma\cG_2}(u^k_a)\}\). However, these results do not imply the convergence of $\{\bw^k_a\}$ or $\{w^k_a\}$ of the PR splitting method to solve COP \eqref{primal}. One key reason is that averaging and applying the resolvent operator cannot generally be interchanged. {An analytical example in Appendix \ref{sec-example} further highlights the challenges of analyzing the ergodic sequence of the PR splitting method by showing that the sequence \(\{w^k\}\) generated by the PR splitting method for solving COP \eqref{primal} may be unbounded.} To our knowledge, whether the ergodic sequence of the PR splitting method converges to solve COP \eqref{primal} remains an unsolved question.

In this paper, we address this unsolved question by proving the convergence of the ergodic sequence $\{\bw^k_a\}$ of the pADMM with $\rho \in (0,2]$, which includes the PR splitting method with semi-proximal terms as a special case. Specifically, by reformulating the pADMM as a degenerate proximal point algorithm (dPPA) with a positive semidefinite preconditioner, as proposed in \cite{bredies2022degenerate}, we establish the ergodic convergence of the pADMM by analyzing the ergodic convergence of the dPPA with a relaxation factor $\rho \in (0,2]$. Furthermore, numerical results on the LP benchmark dataset show that, with a restart strategy, the ergodic sequence of the PR splitting method with semi-proximal terms outperforms both the point-wise and ergodic sequences of the DR splitting method with semi-proximal terms. This empirical evidence highlights the practical advantages of the ergodic sequence of the PR splitting method with semi-proximal terms for solving large-scale COPs \eqref{primal}. {To provide theoretical justification for these findings, in Appendix \ref{sec-ergodic complexity}, we establish an ergodic iteration complexity of $O(1/k)$ for the PR splitting method with semi-proximal terms, evaluated in terms of the objective error, the feasibility violation, and the Karush-Kuhn-Tucker residual using the $\varepsilon$-subdifferential.}

The remainder of this paper is organized as follows: Section \ref{sec:2} establishes the ergodic convergence of the dPPA for $\rho\in(0,2]$. Section \ref{sec:3} builds on this result to prove the ergodic convergence of the PR splitting method for COPs \eqref{primal}. Section \ref{sec:4} presents numerical results on the LP benchmark dataset, highlighting the superior performance of the PR splitting method’s ergodic sequence. Finally, Section \ref{sec:5} concludes the paper.

\paragraph{Notation.}
For any convex function $f: \mathbb{X} \to (-\infty, +\infty]$, we define its effective domain as $\operatorname{dom}(f) := \{x \in \mathbb{X} : f(x) < \infty\}$, its conjugate as \(f^*(x) := \sup_{z \in \mathbb{X}} \{\langle x, z \rangle - f(z)\}, \, x \in \mathbb{X}\),  and its subdifferential \(\partial f(x)=\{v \mid f(z)\geq f(x)+\langle v,x-z \rangle, \forall z \in \mathbb{X}\}, \, x \in \mathbb{X}\). Furthermore, consider a closed convex set \(C \subseteq \mathbb{X}\). We express the Euclidean projection of \(x\) onto \(C\) as \(\Pi_C(x) := \arg \min \{\|x - z\| \mid z \in C\}\). Moreover, for a linear operator \(A: \mathbb{X} \to \mathbb{Y}\), its adjoint is denoted by \(A^*\) and $\lambda_1(AA^{*})$ represents the largest eigenvalue of $AA^{*}$. Additionally, for any self-adjoint, positive semidefinite linear operator $\mathcal{M}: \mathbb{X} \to \mathbb{X}$, we define the semi-norm as \(\|x\|_{\cM} := \sqrt{\langle x, x \rangle_{\mathcal{M}}}=\sqrt{\langle x, \mathcal{M} x \rangle}\) for any $x \in \mathbb{X}$. Finally, for a non-expansive operator \(\hcF : \bX \to \bX\), the set of fixed points of \(\hcF\) is denoted by \(\operatorname{Fix} \hcF\).

\section{Ergodic convergence of the dPPA}\label{sec:2}
Let $\cT$ be a maximal monotone operator from $\mathbb{W}$ into itself.
Consider the following inclusion problem:\begin{equation}\label{model:inclusion}
		\text{ find } w\in \mathbb{W} \text{ such that } 0\in \cT w.
	\end{equation}
 Assume that  $\mathcal{M}: \mathbb{W} \to \mathbb{W}$ is an admissible preconditioner for the maximal monotone operator $\mathcal{T}$, that is, $\mathcal{M}$ is a linear, bounded, self-adjoint and positive semidefinite operator such that $\hcT:=(\cM+\cT)^{-1}\cM$ is single-valued and has a full domain.  
	For a given  $w^0 \in \mathbb{W}$, the dPPA \cite{bredies2022degenerate} for solving {the inclusion} problem \eqref{model:inclusion} is given by:
\begin{equation}\label{alg:dPPA}
		  w^{k+1}= (1-\rho) w^k + \rho \bw^k, \ {\bw^k= {\hcT w^k= (\mathcal{M}+\mathcal{T})^{-1} \mathcal{M} w^k}, \ k=0,1,\ldots, }
	\end{equation}
	where $\rho\in (0,2]$.  When $\mathcal{M}$ is the identity mapping, the dPPA reduces to the classical PPA introduced by Rockafellar in \cite{rockafellar1976a}. By carefully selecting $\mathcal{M}$, the computation of \(\hcT\) can be greatly simplified, as discussed in \cite{bredies2022degenerate,sun2024accelerating}.

    The global point-wise convergence of the dPPA for $\rho\in(0,2)$ has been established by Bredies et al. \cite{bredies2022degenerate} under the assumption that $(\cM+\cT)^{-1}$ is $L$-Lipschitz continuous, i.e., there exists a constant $L \geq 0$ such that for all $v_1, v_2$ in the domain of $(\mathcal{M} + \mathcal{T})^{-1}$,
		\[
		\|(\cM+\cT)^{-1}v_1 - (\cM+\cT)^{-1}v_2\| \leq L \|v_1 - v_2\|.
		\]
    This Lipschitz continuity assumption is met by many splitting algorithms, including the pADMM, under mild conditions (see \cite{bredies2022degenerate,sun2024accelerating}). For $\rho = 2$, the dPPA may fail to converge point-wisely for solving the inclusion problem \eqref{model:inclusion} without imposing additional assumptions, such as the strong monotonicity of $\mathcal{T}$. Instead, we focus on analyzing the global convergence of the following ergodic sequences generated by the dPPA for $\rho = 2$, without requiring the strong monotonicity of $\mathcal{T}$:
\begin{equation}\label{def:ergodic}
w^k_a = \frac{1}{k+1} \sum_{t=0}^{k} w^t, \quad \bar{w}^k_a = \frac{1}{k+1} \sum_{t=0}^{k} \bar{w}^t, \quad \forall k \geq 0,
\end{equation}
where the sequences $\{w^t\}$ and $\{\bar{w}^t\}$ are generated by the dPPA \eqref{alg:dPPA}. Recall that, for a maximal monotone operator \(\mathcal{T}: \mathbb{W} \rightarrow 2^{\mathbb{W}}\) and \(\varepsilon \geq 0\), the \(\varepsilon\)-enlargement of \(\mathcal{T}\) at \(w\) \cite{burachik1997enlargement} is defined as
\[
\mathcal{T}^{\varepsilon}(w) = \{v \in \mathbb{W} : \langle w - w^{\prime}, v - v^{\prime} \rangle \geq -\varepsilon, \, \forall (w^{\prime}, v^{\prime}) \in \text{gph}(\mathcal{T})\},
\]
where \(\text{gph}(\mathcal{T}) := \{(w, v) \in \mathbb{W} \times \mathbb{W} \mid v \in \mathcal{T} w\}\). Using the $\varepsilon$-enlargement of $\mathcal{T}$, we can establish the following proposition on the ergodic convergence properties of the dPPA.
\begin{proposition}\label{prop: ergodic-rateofdPPA}
Let \(\mathcal{T} : \mathbb{W} \rightarrow 2^{\mathbb{W}}\) be a maximal monotone operator with \(\mathcal{T}^{-1}(0) \neq \emptyset\), and let \(\mathcal{M}\) be an admissible preconditioner. Then the ergodic sequences \(\{\bw_a^k\}\) and \(\{w_a^k\}\), generated by the dPPA \eqref{alg:dPPA} with \(\rho \in (0, 2]\), satisfy   for all \(k \geq 0\) and $w^* \in \mathcal{T}^{-1}(0)$ that 
\begin{enumerate}
    \item[(a)] \(\|\bw_a^{k} - w_a^{k}\|_{\mathcal{M}} \leq \frac{2}{\rho(k+1)} \|w^0 - w^*\|_{\mathcal{M}} ;\)
    \item[(b)] \(\mathcal{M}(w_a^k - \bw_a^k) \in \mathcal{T}^{\bar{\varepsilon}_a^k}(\bw_a^k),\) where \(\bar{\varepsilon}_a^k := \frac{1}{k+1} \sum_{t=0}^{k} \langle \bw^t - \bw_a^k, w^t - \bw^t \rangle_{\mathcal{M}}\) and
    $
    0 \leq \bar{\varepsilon}_a^k \leq \frac{1}{2\rho(k+1)} \|w^0 - w^*\|_{\mathcal{M}}^2 .
    $
\end{enumerate}
\end{proposition}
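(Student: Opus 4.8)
I would work entirely at the level of one dPPA step together with the monotonicity of $\cT$ and the convexity (transportation) property of the $\varepsilon$‑enlargement. The starting observations are: from $\bw^k=(\cM+\cT)^{-1}\cM w^k$ one has $\cM w^k\in(\cM+\cT)\bw^k$, hence
\[
v^k:=\cM(w^k-\bw^k)\in\cT\bw^k ,
\]
and Step 4 of the dPPA gives $w^{k+1}-w^k=\rho(\bw^k-w^k)$. Expanding the square then yields, for any fixed $c\in\bW$, the energy relation
\[
\|w^{k+1}-c\|_{\cM}^2=\|w^k-c\|_{\cM}^2-\rho(2-\rho)\|w^k-\bw^k\|_{\cM}^2-2\rho\langle v^k,\bw^k-c\rangle .
\]
Taking $c=w^*\in\cT^{-1}(0)$ and using $\langle v^k,\bw^k-w^*\rangle\ge0$ (monotonicity, since $0\in\cT w^*$), the restriction $\rho\in(0,2]$ makes $\{\|w^k-w^*\|_{\cM}\}$ non‑increasing; in particular $\|w^{k+1}-w^*\|_{\cM}\le\|w^0-w^*\|_{\cM}$ for all $k$. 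For part (a), telescoping Step 4 gives $\bw_a^k-w_a^k=\frac1{k+1}\sum_{t=0}^k(\bw^t-w^t)=\frac1{\rho(k+1)}(w^{k+1}-w^0)$, so the triangle inequality and this $\cM$‑monotonicity give $\|\bw_a^k-w_a^k\|_{\cM}\le\frac1{\rho(k+1)}\big(\|w^{k+1}-w^*\|_{\cM}+\|w^0-w^*\|_{\cM}\big)\le\frac{2}{\rho(k+1)}\|w^0-w^*\|_{\cM}$.

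For the membership in part (b), I would use the transportation formula. Since $\sum_{t=0}^k(\bw^t-\bw_a^k)=0$ and $\frac1{k+1}\sum_{t=0}^k v^t=\cM(w_a^k-\bw_a^k)$, for every $(w',v')\in\text{gph}(\cT)$,
\[
\big\langle \bw_a^k-w',\ \cM(w_a^k-\bw_a^k)-v'\big\rangle
=\frac1{k+1}\sum_{t=0}^k\langle v^t-v',\bw^t-w'\rangle+\frac1{k+1}\sum_{t=0}^k\langle v^t,\bw_a^k-\bw^t\rangle\ \ge\ -\bar\varepsilon_a^k ,
\]
because each term of the first sum is $\ge0$ by monotonicity, while the second sum equals $-\bar\varepsilon_a^k$ by the definition of $\bar\varepsilon_a^k$ together with $v^t=\cM(w^t-\bw^t)$; this is exactly $\cM(w_a^k-\bw_a^k)\in\cT^{\bar\varepsilon_a^k}(\bw_a^k)$. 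The lower bound $\bar\varepsilon_a^k\ge0$ follows by rewriting $\bar\varepsilon_a^k=\frac1{2(k+1)^2}\sum_{s,t=0}^k\langle\bw^t-\bw^s,v^t-v^s\rangle$ and applying monotonicity of $\cT$ to each summand.

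The main work is the upper bound on $\bar\varepsilon_a^k$. Applying the energy relation with the constant center $c=\bw_a^k$ and summing over $t=0,\dots,k$ telescopes, using $\sum_{t=0}^k\langle v^t,\bw^t-\bw_a^k\rangle=(k+1)\bar\varepsilon_a^k$, to
\[
2\rho(k+1)\,\bar\varepsilon_a^k=\|w^0-\bw_a^k\|_{\cM}^2-\|w^{k+1}-\bw_a^k\|_{\cM}^2-\rho(2-\rho)\sum_{t=0}^k\|w^t-\bw^t\|_{\cM}^2 ,
\]
whence, since $\rho\in(0,2]$, $2\rho(k+1)\bar\varepsilon_a^k\le\|w^0-\bw_a^k\|_{\cM}^2-\|w^{k+1}-\bw_a^k\|_{\cM}^2$. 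It then remains to show that the right‑hand side is at most $\|w^0-w^*\|_{\cM}^2$; subtracting the $c=w^*$ energy relation from the $c=\bw_a^k$ one, this is equivalent to the averaged cross term $\langle\cM(w_a^k-\bw_a^k),\,\bw_a^k-w^*\rangle$ being nonnegative (a clean sufficient condition is $\|w^0-\bw_a^k\|_{\cM}\le\|w^0-w^*\|_{\cM}$).

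I expect this last step to be the main obstacle: a naive Cauchy--Schwarz/triangle estimate of $\langle\cM(w_a^k-\bw_a^k),\bw_a^k-w^*\rangle$ only yields a weaker constant, because $\bw_a^k$ is not a fixed point and the individual resolvent images $\bw^t=\hcT w^t$ need not satisfy $\|w^0-\bw^t\|_{\cM}\le\|w^0-w^*\|_{\cM}$. I would resolve it by exploiting that $\bw_a^k$ is a genuine convex combination of the images $\hcT w^t$ together with the firm $\cM$‑nonexpansiveness of $\hcT$ (i.e.\ $\|\hcT w-\hcT w'\|_{\cM}^2\le\langle\hcT w-\hcT w',w-w'\rangle_{\cM}$, with $\hcT w^*=w^*$), which keeps the double average $\bw_a^k$ from "overshooting'' $w^*$ relative to $w^0$; equivalently, one argues on the orbit of the operator $(1-\rho)\cI+\rho\hcT$ (which is $\tfrac\rho2$‑averaged for $\rho<2$ and nonexpansive for $\rho=2$). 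Once $\langle\cM(w_a^k-\bw_a^k),\bw_a^k-w^*\rangle\ge0$ is established, combining it with the telescoped identity above and $\|w^{k+1}-\bw_a^k\|_{\cM}^2\ge0$ delivers $\bar\varepsilon_a^k\le\frac1{2\rho(k+1)}\|w^0-w^*\|_{\cM}^2$, completing part (b).
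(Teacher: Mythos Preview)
Your treatment of part~(a), of the $\varepsilon$-enlargement membership in part~(b), and of the nonnegativity of $\bar\varepsilon_a^k$ is correct. In fact your symmetrization argument
\[
\bar\varepsilon_a^k=\frac{1}{2(k+1)^2}\sum_{s,t}\langle \bw^t-\bw^s,\;v^t-v^s\rangle\ge 0
\]
is cleaner than the paper's, which instead argues by contradiction using the maximality of~$\cT$.

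The genuine gap is the upper bound on $\bar\varepsilon_a^k$. You correctly reach the telescoped inequality
\[
2\rho(k+1)\,\bar\varepsilon_a^k\;\le\;\|w^0-\bw_a^k\|_{\cM}^2-\|w^{k+1}-\bw_a^k\|_{\cM}^2,
\]
which is exactly the intermediate point the paper arrives at. But your proposed completion---showing $\langle \cM(w_a^k-\bw_a^k),\,\bw_a^k-w^*\rangle\ge 0$ via firm $\cM$-nonexpansiveness of $\hcT$---is not substantiated: firm nonexpansiveness yields $\langle v^t,\bw^t-w^*\rangle\ge 0$ for each $t$, but your cross term pairs $v^t$ with the \emph{average} $\bw_a^k$, not with $\bw^t$, and there is no evident way to close that gap. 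Your parenthetical ``clean sufficient condition'' $\|w^0-\bw_a^k\|_{\cM}\le\|w^0-w^*\|_{\cM}$ is in fact false in general; already for $\cM=\cI$, $\cT$ a planar $90^\circ$ rotation, and $\rho=2$, one has $\|w^0-\bw_a^1\|>\|w^0-w^*\|$.

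The paper avoids both conditions entirely. Instead of comparing the $c=\bw_a^k$ and $c=w^*$ energy identities, it expands around $w^0$:
\[
\|w^0-\bw_a^k\|_{\cM}^2-\|w^{k+1}-\bw_a^k\|_{\cM}^2
= -\|w^0-w^{k+1}\|_{\cM}^2-2\langle w^{k+1}-w^0,\,w^0-\bw_a^k\rangle_{\cM},
\]
applies Cauchy--Schwarz to the inner product, and then only needs the \emph{weak} bound $\|w^0-\bw_a^k\|_{\cM}\le 2\|w^0-w^*\|_{\cM}$, which follows immediately from Jensen together with $\|\bw^t-w^*\|_{\cM}\le\|w^t-w^*\|_{\cM}\le\|w^0-w^*\|_{\cM}$ (Fej\'er monotonicity). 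The retained negative quadratic $-\|w^0-w^{k+1}\|_{\cM}^2$ is what absorbs the slack via $-a^2+2ab\le b^2$. This is precisely the trick you are missing: do not discard $\|w^{k+1}-\bw_a^k\|_{\cM}^2$ or attempt to prove either of your sufficient conditions; expand it around $w^0$ and complete the square.
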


\begin{proof}
Note that $\hcT$ is $\cM$-firmly non-expansive, as stated in \cite[Proposition 2.3]{sun2024accelerating}, meaning that
$$
\|\widehat{\mathcal{T}} w-\widehat{\mathcal{T}} w^{\prime}\|_{\mathcal{M}}^2+\|(\cI-\hcT) w-(\cI-\hcT) w^{\prime}\|_{\mathcal{M}}^2 \leq\|w-w^{\prime}\|_{\mathcal{M}}^2, \, \forall  w, w^{\prime} \in \mathbb{W}.
$$
From this property, we derive the following key inequality for the dPPA \eqref{alg:dPPA}:
\begin{equation}\label{eq:key-inequ}
\|w^{k+1}-w^*\|_{\cM}^2  \leq\|w^k-w^*\|_{\cM}^2-\rho(2-\rho)\|w^k-\bw^k\|_{\cM}^2, \ \forall k\geq 0,  w^* \in \cT^{-1}(0).
\end{equation}
Using this inequality and the scheme \eqref{alg:dPPA}, we can complete the proof of statement (a) by noticing that for any $k\geq0$ and $w^* \in \cT^{-1}(0)$,
$$
\begin{array}{ll}
\|\bw_a^{k}-w_a^{k}\|_{\cM}& \displaystyle=\| \frac{1}{k+1}\sum_{t=0}^{k} (\bw^{t}-w^{t})\|_{\cM}=\|\frac{1}{k+1}\sum_{t=0}^{k} \frac{(w^{t+1}-w^{t})}{\rho}\|_{\cM} \\
&\displaystyle=\frac{1}{\rho(k+1)}\|w^{k+1}-w^{0}\|_{\cM} \leq\frac{2}{\rho(k+1)}\|w^{0}-w^{*}\|_{\cM}.   
\end{array}
$$
 Moreover, for any \((w^{\prime}, v^{\prime}) \in \operatorname{gph}(\mathcal{T})\), the definitions of \(w^k_{a}\) and \(\bw^{k}_{a}\) in \eqref{def:ergodic}, along with the monotonicity of $\mathcal{T}$, yield the following for any $k \geq 0$:
\begin{equation}\label{monotone-subdifferential}
\begin{array}{ll}
 &\displaystyle\langle \bw^{k}_{a}-w^{\prime},\cM(w^{k}_{a}-\bw^{k}_{a}) -v^{\prime} \rangle = \frac{1}{k+1}\sum_{t=0}^{k}\langle  \bw^{t}-w^{\prime},\cM(w^{k}_{a}-\bw^{k}_{a}) -v^{\prime}\rangle\\
   =  &\displaystyle \frac{1}{k+1}\sum_{t=0}^{k} \Big(\langle  \bw^{t}-w^{\prime},(w^{k}_{a}-\bw^{k}_{a}) - (w^{t}-\bw^{t})\rangle_{\cM}\\ & \qquad\qquad  \qquad  +\langle  \bw^{t}-w^{\prime},\cM (w^{t}-\bw^{t}) -v^{\prime}\rangle   \Big)\\
    \geq  & \displaystyle \frac{1}{k+1}\sum_{t=0}^{k} \big\langle  \bw^{t}-w^{\prime},(w^{k}_{a}-\bw^{k}_{a}) - (w^{t}-\bw^{t})\big\rangle_{\cM}\\
    = &\displaystyle \frac{1}{k+1}\sum_{t=0}^{k} \big\langle  \bw^{t}-\bw^{k}_{a} +\bw^{k}_{a}-w^{\prime},(w^{k}_{a}-\bw^{k}_{a}) -(w^{t}-\bw^{t})\big\rangle_{\cM}\\
    % + \langle  \bw^{k}_{a}-w^{\prime},\cM((w^{k}_{a}-\bw^{k}_{a}) -(w^{t}-\bw^{t}))\rangle 
    =  &\displaystyle \frac{1}{k+1}\sum_{t=0}^{k}\big\langle  \bw^{t}-\bw^{k}_{a},(w^{k}_{a}-\bw^{k}_{a}) - (w^{t}-\bw^{t})\big\rangle_{\cM}\\
     =  &\displaystyle - \frac{1}{k+1}\sum_{t=0}^{k} \langle  \bw^{t}-\bw^{k}_{a}, w^{t}-\bw^{t}\rangle_{\cM}=-\bar{\varepsilon}^{k}_{a}.
\end{array}   
\end{equation}
It follows from the definition of the $\varepsilon$-enlargement of $\cT$ that 
$\mathcal{M}(w_a^k - \bw_a^k) \in \mathcal{T}^{\bar{\varepsilon}_a^k}(\bw_a^k).$
Next, we prove that \(\bar{\varepsilon}^{k}_{a} \geq 0\) for all \(k \geq 0\) by contradiction. Suppose that \(\bar{\varepsilon}^{k}_{a} < 0\) for some \(k \geq 0\). Then, for any \((w^{\prime}, v^{\prime}) \in \operatorname{gph}(\mathcal{T})\), from \eqref{monotone-subdifferential}, we have
\[
\langle \bw^{k}_{a} - w^{\prime}, \mathcal{M}(w^{k}_{a} - \bw^{k}_{a}) - v^{\prime} \rangle \geq -\bar{\varepsilon}^{k}_{a}  > 0,
\]
which, combined with the maximality of \(\mathcal{T}\), implies that \((\bw^{k}_{a}, \mathcal{M}(w^{k}_{a} - \bw^{k}_{a})) \in \operatorname{gph}(\mathcal{T})\). Taking \((w^{\prime}, v^{\prime}) = (\bw^{k}_{a}, \mathcal{M}(w^{k}_{a} - \bw^{k}_{a}))\), we obtain \(0 \geq -\bar{\varepsilon}^{k}_{a}\), which contradicts the assumption $\varepsilon^k_a<0$. Thus, \(\bar{\varepsilon}^{k}_{a} \geq 0\) for all \(k \geq 0\).

Now, we establish an upper bound for \(\bar{\varepsilon}^{k}_{a}\) for any \(k \geq 0\). By the scheme of dPPA \eqref{alg:dPPA} and $\rho\in(0,2]$,  we have
\begin{equation}\label{eq:epsilon_upperbound}
    \begin{array}{ll}
\bar{\varepsilon}^{k}_{a}&\displaystyle= \frac{1}{k+1}\sum_{t=0}^{k}\langle w^{t}-\bw^{t}, \bw^{t}-\bw^{k}_{a}\rangle_{\cM}\\
&\displaystyle=\frac{1}{k+1}\sum_{t=0}^{k} \Big( \langle  \frac{1}{\rho}(w^t-w^{t+1}), w^{t} -\frac{1}{\rho}(w^t-w^{t+1})  - \bw^k_{a}  \rangle_{\cM}\Big)\\
&\displaystyle=\frac{1}{k+1}\sum_{t=0}^{k} \Big(-\frac{1}{\rho^2}\|w^t-w^{t+1}\|^{2}_{\cM}+\frac{1}{2\rho}\|w^t-w^{t+1}\|^{2}_{\cM} \\
&\displaystyle\qquad \qquad \qquad  + \frac{1}{2\rho}(\|w^{t}-\bw^{k}_{a}\|^{2}_{\cM} -\|w^{t+1}-\bw^{k}_{a}\|^{2}_{\cM})\Big)\\
&\displaystyle \leq \frac{1}{k+1}\sum_{t=0}^{k}\Big(\frac{1}{2\rho}( \|w^{t}-\bw^{k}_{a}\|^{2}_{\cM} -\|w^{t+1}-\bw^{k}_{a}\|^{2}_{\cM})\Big)\\
&\displaystyle = \frac{1}{2\rho(k+1)} \Big( \|w^{0}-\bw^{k}_{a}\|^{2}_{\cM} -\|w^{k+1}-\bw^{k}_{a}\|^{2}_{\cM}\Big)\\
&\displaystyle = \frac{1}{2\rho(k+1)} \Big( -\|w^0-w^{k+1}\|_{\cM}^2-2\langle w^{k+1}-w^{0}, w^{0}-\bw^{k}_{a} \rangle_{\cM}  \Big)\\
&\displaystyle \leq  \frac{1}{2\rho(k+1)} \Big( -\|w^0-w^{k+1}\|_{\cM}^2+2\| w^{k+1}-w^{0}\|_{\cM}\|w^{0}-\bw^{k}_{a}\|_{\cM} \Big).\\
    \end{array}
\end{equation}
Using the convexity of $\|\cdot\|_{\cM}$ and \eqref{eq:key-inequ}, we obtain that
$$
\displaystyle\|w^{0}-\bw^{k}_{a}\|_{\cM}=\|\frac{1}{k+1}\sum_{t=0}^{k}(w^{0}-\bw^{t})\|_{\cM}\leq \frac{1}{k+1}\sum_{t=0}^{k}\|w^{0}-\bw^{t}\|_{\cM}\leq 2\|w^{0}-w^{*}\|_{\cM}.
$$   
Combining this with \eqref{eq:epsilon_upperbound}, we derive   for any \(k \geq 0\) that
\begin{equation*}
    \begin{array}{ll}
\bar{\varepsilon}^{k}_{a}&\displaystyle\leq  \frac{1}{2\rho(k+1)} \Big( -\|w^0-w^{k+1}\|_{\cM}^2+2\| w^{k+1}-w^{0}\|_{\cM}\|w^{0}-w^{*}\|_{\cM} \Big)\\
&\displaystyle\leq  \frac{1}{2\rho(k+1)} \|w^{0}-w^{*}\|_{\cM}^2.
    \end{array}
\end{equation*}
This completes the proof. 
\end{proof}
Based on Proposition \ref{prop: ergodic-rateofdPPA}, we can establish the convergence of ergodic sequence \(\{\bw^k_a\}\) generated by the dPPA \eqref{alg:dPPA} in Theorem \ref{Th:convergence-ergodic-dPPA}.

\begin{theorem}\label{Th:convergence-ergodic-dPPA}
Let \(\mathcal{T} : \mathbb{W} \rightarrow 2^{\mathbb{W}}\) be a maximal monotone operator with \(\mathcal{T}^{-1}(0) \neq \emptyset\), and let \(\mathcal{M}\) be an admissible preconditioner such that \((\mathcal{M} + \mathcal{T})^{-1}\) is \(L\)-Lipschitz continuous. Then, the ergodic sequence \(\{\bw^k_a\}\) generated by the dPPA \eqref{alg:dPPA} with \(\rho \in (0, 2]\) converges to a point in \(\mathcal{T}^{-1}(0)\).
\end{theorem}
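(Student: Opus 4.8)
The plan is to combine the quantitative bounds of Proposition~\ref{prop: ergodic-rateofdPPA} with a mean ergodic argument. First, I would establish that $\{\bw^k_a\}$ is bounded in norm. As recalled at the start of the proof of Proposition~\ref{prop: ergodic-rateofdPPA}, $\hcT$ is $\cM$-firmly non-expansive, so \eqref{eq:key-inequ} yields $\|w^k-w^*\|_{\cM}\le\|w^0-w^*\|_{\cM}$ for all $k$ and every $w^*\in\cT^{-1}(0)$; hence $\{\cM w^k\}$ is bounded. Since $0\in\cT w^*$ forces $w^*=(\cM+\cT)^{-1}\cM w^*$, the $L$-Lipschitz continuity of $(\cM+\cT)^{-1}$ gives $\|\bw^k-w^*\|=\|(\cM+\cT)^{-1}\cM w^k-(\cM+\cT)^{-1}\cM w^*\|\le L\|\cM w^k-\cM w^*\|$, so $\{\bw^k\}$, and with it the ergodic sequence $\{\bw^k_a\}$, is bounded. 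This is the only step that uses the $L$-Lipschitz hypothesis; note that $\{w^k\}$ and $\{w^k_a\}$ themselves are controlled only in the $\cM$-semi-norm and may be unbounded.

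Second, I would show that every norm cluster point of $\{\bw^k_a\}$ lies in $\cT^{-1}(0)$. By Proposition~\ref{prop: ergodic-rateofdPPA}(b), $\cM(w^k_a-\bw^k_a)\in\cT^{\bar\varepsilon^k_a}(\bw^k_a)$, while $\bar\varepsilon^k_a\to0$ and, using $\|\cM v\|\le\sqrt{\lambda_1(\cM)}\,\|v\|_{\cM}$ together with part~(a), $\cM(w^k_a-\bw^k_a)\to0$ in norm. For a subsequence $\bw^{k_j}_a\to\hat w$, passing to the limit in $\langle\bw^{k_j}_a-w',\,\cM(w^{k_j}_a-\bw^{k_j}_a)-v'\rangle\ge-\bar\varepsilon^{k_j}_a$ (which holds for all $(w',v')\in\operatorname{gph}(\cT)$) gives $\langle\hat w-w',-v'\rangle\ge0$ for all such $(w',v')$, i.e.\ $0\in\cT^{0}(\hat w)=\cT\hat w$ by maximality of $\cT$; thus $\hat w\in\cT^{-1}(0)$.

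Third, and this is the step I expect to be the main obstacle, I would prove that $\{\cM\bw^k_a\}$ converges. The key point is that the dPPA iteration operator $\cF_\rho:=(1-\rho)\cI+\rho\hcT=(1-\tfrac{\rho}{2})\cI+\tfrac{\rho}{2}(2\hcT-\cI)$ is $\cM$-non-expansive for every $\rho\in(0,2]$, being a convex combination of the $\cM$-non-expansive maps $\cI$ and $2\hcT-\cI$; consequently $\cF_\rho w-\cF_\rho w'\in\ker\cM$ whenever $w-w'\in\ker\cM$, so $\cF_\rho$ descends to a genuinely non-expansive operator (still denoted $\cF_\rho$) on the finite-dimensional Hilbert space $\mathbb{W}/\ker\cM$, whose fixed-point set is nonempty since the class of any $w^*\in\cT^{-1}(0)$ is fixed. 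Applying Baillon's nonlinear ergodic theorem \cite{baillon1975theoreme} to the orbit $[w^t]=\cF_\rho^{t}[w^0]$ shows that $[w^k_a]=\frac{1}{k+1}\sum_{t=0}^{k}[w^t]$ converges, that is, $\cM^{1/2}w^k_a$, and hence $\cM w^k_a$, converges to some $\cM\bw^\infty$; since $\|\bw^k_a-w^k_a\|_{\cM}\to0$ by part~(a), also $\cM\bw^k_a\to\cM\bw^\infty$. For $\rho=2$ the contraction/Fej\'er arguments underpinning the point-wise analysis of \cite{bredies2022degenerate} are unavailable, and the remedy is exactly this passage to the quotient space, where $\cF_\rho$ is bona fide non-expansive and the mean ergodic theorem applies.

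Finally, every norm cluster point $\hat w$ of the bounded sequence $\{\bw^k_a\}$ satisfies $\hat w\in\cT^{-1}(0)$ and $\cM\hat w=\cM\bw^\infty$ by the preceding steps. Admissibility of $\cM$ makes $\cM$ injective on $\cT^{-1}(0)$: if $p_1,p_2\in\cT^{-1}(0)$ with $\cM p_1=\cM p_2$, then $p_1$ and $p_2$ both equal the single value $(\cM+\cT)^{-1}(\cM p_1)$, whence $p_1=p_2$. Therefore all cluster points of $\{\bw^k_a\}$ coincide, and the bounded sequence $\{\bw^k_a\}$ converges to a point of $\cT^{-1}(0)$, as claimed.
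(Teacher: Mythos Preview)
Your proposal is correct and follows essentially the same route as the paper: boundedness of $\{\bw^k_a\}$ via the $L$-Lipschitz hypothesis, identification of cluster points as zeros of $\cT$ via Proposition~\ref{prop: ergodic-rateofdPPA}, an application of Baillon's ergodic theorem on a reduced space where the $\cM$-semi-norm becomes a genuine norm, and uniqueness of cluster points from the identity $p=(\cM+\cT)^{-1}\cM p$ for $p\in\cT^{-1}(0)$. The only cosmetic difference is that the paper passes to a shadow space $\mathbb{U}$ through a factorisation $\cM=\cC\cC^*$ with $\cC$ injective (setting $u^k=\cC^*w^k$ and invoking a non-expansive operator $\tcF_\rho$ on $\mathbb{U}$), whereas you pass to the quotient $\mathbb{W}/\ker\cM$; these two constructions are isometrically equivalent, and the paper's concluding Opial-type step is just another phrasing of your injectivity argument.
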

\begin{proof}
Suppose that \(\mathcal{M} = \mathcal{C} \mathcal{C}^{*}\) is a decomposition of \(\mathcal{M}\) according to \cite[Proposition 2.3]{bredies2022degenerate}, where \(\mathcal{C} : \mathbb{U} \rightarrow \mathbb{W}\) is an injective operator. Since \((\mathcal{M} + \mathcal{T})^{-1}\) is \(L\)-Lipschitz continuous and \(\|\mathcal{C}^{*} w\| = \|w\|_{\mathcal{M}}\) for every \(w \in \mathbb{W}\), we derive  for all \(w^{\prime} \in \mathbb{W}\) and \(w^{*} \in \mathcal{T}^{-1}(0)\) that
\[
\|\widehat{\mathcal{T}} w^{\prime} - \widehat{\mathcal{T}} w^{*}\| = \|(\mathcal{M} + \mathcal{T})^{-1} \mathcal{C C}^{*} w^{\prime} - (\mathcal{M} + \mathcal{T})^{-1} \mathcal{C C}^{*} w^{*}\| \leq L \|\mathcal{C}\| \|w^{\prime} - w^{*}\|_{\mathcal{M}}.
\]
Combining this with the $\cM$-firm non-expansiveness of $\hcT$ \cite[Proposition 2.3]{sun2024accelerating}, we conclude that
\[
\|\bw^k - w^{*}\| = \|\widehat{\mathcal{T}} w^k - w^{*}\| \leq L \|\mathcal{C}\| \|w^k - w^{*}\|_{\mathcal{M}} \leq L \|\mathcal{C}\| \|w^0 - w^{*}\|_{\mathcal{M}}.
\]
Thus, both sequences \(\{\bw^k\}\) and \(\{\bw^k_a\}\) are bounded. Furthermore, according to Proposition \ref{prop: ergodic-rateofdPPA} and the maximality of \(\mathcal{T}\), any cluster point of \(\{\bw^{k}_{a}\}\) belongs to \(\mathcal{T}^{-1}(0)\). 

To establish the uniqueness of cluster points, we define two shadow sequences as follows:
\begin{equation}\label{def:u}
  u^{k} = \mathcal{C}^{*} w^k \quad \text{and} \quad u^{k}_{a} = \frac{1}{k+1}\sum_{t=0}^{k} u^t, \quad \forall k \geq 0.  
\end{equation}
A straightforward calculation shows that
$u^{k+1} = \tcF_{\rho} u^{k}, \, \forall k \geq 0,$
where \(\tcF_{\rho} := (1 - \rho)\mathcal{I} + \rho (\mathcal{C}^{*}(\mathcal{M} + \mathcal{T})^{-1} \mathcal{C})\) with \(\rho \in (0, 2]\) is a non-expansive operator, as shown in \cite[Proposition 2.5]{sun2024accelerating}. By Baillon’s non-linear ergodic theorem \cite{baillon1975theoreme}, the sequence \(\{u^{k}_{a}\}\) converges to a point in \(\operatorname{Fix}(\tcF_{\rho})\). Using the equivalence between \(\operatorname{Fix}(\tcF_{\rho})\) and \(\mathcal{C}^{*} \mathcal{T}^{-1}(0)\) as stated in \cite[Proposition 2.5]{sun2024accelerating}, we conclude that there exists \(w^{*}_{a} \in \mathcal{T}^{-1}(0)\) such that
\[
\|u^{k}_{a} - \mathcal{C}^{*} w^{*}_{a}\| \to 0.
\]
Therefore, by the definition of \(\{u^{k}_{a}\}\) in \eqref{def:u}, we have
\[
\|w^{k}_{a} - w^{*}_{a}\|_{\mathcal{M}} = \|\frac{1}{k+1} \sum_{t=0}^{k} \mathcal{C}^{*} w^t - \mathcal{C}^{*} w^{*}_{a}\| = \|u^{k}_{a} - \mathcal{C}^{*} w^{*}_{a}\| \to 0,
\]
which, together with part (a) of Proposition \ref{prop: ergodic-rateofdPPA}, implies that
\begin{equation}\label{eq:avg-bw-w}
\|\bw^k_{a} - w^{*}_{a}\|_{\mathcal{M}}^{2} = \|\bw^k_{a} - w^{k}_{a}\|_{\mathcal{M}}^{2} + \|w^{k}_{a} - w^{*}_{a}\|_{\mathcal{M}}^{2} + 2 \langle \bw^k_{a} - w^{k}_{a}, w^{k} - w^{*}_{a} \rangle_{\mathcal{M}} \to 0.
\end{equation}
Since the sequence \(\{\bw^{k}_{a}\}\) is bounded, it must have at least one cluster point. Assume that there is a subsequence \(\{\bw^{k_i}_{a}\}\) converging to \(w^{*}\). Suppose that \(\|w^{*} - w^{*}_{a}\|_{\mathcal{M}} > 0\). By an Opial-type argument \cite[Lemma 1]{opial1967weak}, we have
\[
\underset{i \to \infty}{\liminf} \|\bw^{k_i}_{a} - w^{*}\|_{\mathcal{M}} < \liminf_{i \to \infty} \|\bw^{k_i}_{a} - w^{*}_{a}\|_{\mathcal{M}},
\]
which, combined with \eqref{eq:avg-bw-w}, implies \(\underset{i \to \infty}{\liminf} \|\bw^{k_i}_{a} - w^{*}\|_{\mathcal{M}} < 0\), a contradiction to the  positive semidefiniteness of \(\mathcal{M}\). Hence, \(\|w^{*} - w^{*}_{a}\|_{\mathcal{M}} = 0\). It follows that
\[
w^{*} = (\mathcal{M} + \mathcal{T})^{-1} \mathcal{M} w^{*} = (\mathcal{M} + \mathcal{T})^{-1} \mathcal{M} w^{*}_{a} = w^{*}_{a}.
\]
Taking any other cluster point \(w^{**}\), we can similarly show that \(w^{**} = w^{*}_{a}\). Hence, the cluster point is unique, and the sequence \(\{\bw^{k}_{a}\}\) converges to \(w^{*}_{a}\).
\end{proof}

\section{Ergodic convergence of the PR splitting method}\label{sec:3}
The Karush–Kuhn–Tucker (KKT) system of COP \eqref{primal} is given by:
\begin{equation}\label{eq:KKT}
	\quad -B_1^* {x}^* \in \partial f_1(y^*), \quad -B_2^* {x}^* \in \partial f_2(z^*), \quad B_1 y^* + B_2 z^* - c = 0.
\end{equation}
As shown in \cite[Corollary 28.3.1]{rockafellar1970convex}, $({y}^*, {z}^*) \in \mathbb{Y} \times \mathbb{Z}$ is an optimal solution to problem \eqref{primal} if and only if there exists ${x}^* \in \mathbb{X}$ such that $({y}^*, {z}^*, {x}^*)$ satisfies the KKT system. To analyze the ergodic convergence of the pADMM including the PR splitting method, we make the following assumption:
\begin{assumption}\label{ass: CQ}
The KKT system \eqref{eq:KKT} has a nonempty solution set.
\end{assumption}
Under Assumption \ref{ass: CQ}, solving the COP \eqref{primal} is equivalent to finding $w\in \mathbb{W}=\mathbb{Y}\times \mathbb{Z}\times\mathbb{X}$ such that $0 \in \cT w$, where the maximal monotone operator $\cT$ is
\begin{equation}\label{def:T}
	\cT w = \left(\begin{array}{c}
		\partial f_1(y) + B_1^* x \\
		\partial f_2(z) + B_2^* x \\
		c - B_1 y - B_2 z
	\end{array} \right), \quad \forall w = (y, z, x) \in \mathbb{W}.
\end{equation}
Additionally, since \(f_1\) and \(f_2\) are proper closed convex functions, there exist two self-adjoint and  positive semidefinite operators \(\Sigma_{f_1}\) and \(\Sigma_{f_2}\) such that:
\[
\begin{array}{ll}
&f_1(y) \geq f_1(\hat{y}) + \langle \hat{\phi}, y - \hat{y} \rangle + \frac{1}{2} \|y - \hat{y}\|_{\Sigma_{f_1}}^2, \quad \forall y, \hat{y} \in \operatorname{dom}(f_1), \hat{\phi} \in \partial f_1(\hat{y}),\\
&f_2(z) \geq f_2(\hat{z}) + \langle \hat{\varphi}, z - \hat{z} \rangle + \frac{1}{2} \|z - \hat{z}\|_{\Sigma_{f_2}}^2, \quad \forall z, \hat{z} \in \operatorname{dom}(f_2), \hat{\varphi} \in \partial f_2(\hat{z}).
\end{array}
\]
We make the following assumption to ensure that each step of the pADMM is well defined.
\begin{assumption}\label{ass: Assump-solvability}
\(\Sigma_{f_1} + B_1^* B_1 + \mathcal{T}_1\) and \(\Sigma_{f_2} + B_2^* B_2 + \mathcal{T}_2\) are positive definite.
\end{assumption}
By defining the self-adjoint linear operator \(\mathcal{M}: \mathbb{W} \rightarrow \mathbb{W}\) as
\begin{equation}\label{def:M}
	\mathcal{M} = \left[\begin{array}{ccc}
		\sigma B_1^* B_1 + \mathcal{T}_1 & 0 & B_1^* \\
		0 & \mathcal{T}_2 & 0 \\
		B_1 & 0 & \sigma^{-1} \mathcal{I}
	\end{array}\right],
\end{equation}
Sun et al. \cite{sun2024accelerating} demonstrated the following equivalence between the pADMM in Algorithm \ref{alg:pADMM} and the dPPA in \eqref{alg:dPPA}.
\begin{proposition}[\cite{sun2024accelerating}]\label{prop:equ-PADMM-dPPA}
Suppose that Assumption \ref{ass: Assump-solvability} holds. Consider {the operators} $\cT$ defined in \eqref{def:T} and $\cM$ defined in \eqref{def:M}, respectively. Then the sequence $\{w^k\}$ generated by the pADMM in Algorithm \ref{alg:pADMM} coincides with the sequence $\{w^k\}$ generated by the dPPA in \eqref{alg:dPPA} with the same initial point $ w^0 \in \mathbb{W}$. Additionally, $\cM$ is an admissible preconditioner such that $(\cM + \cT )^{-1}$ is Lipschitz continuous.
\end{proposition}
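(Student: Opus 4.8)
\emph{Overall strategy.} The statement has two parts: (i) the iterate-equivalence between Algorithm \ref{alg:pADMM} and the dPPA \eqref{alg:dPPA} with $\cM$ given by \eqref{def:M} and $\cT$ by \eqref{def:T}, and (ii) the admissibility of $\cM$ together with Lipschitz continuity of $(\cM+\cT)^{-1}$. For (i), the plan is to prove a per-iteration identity: for every $w^k=(y^k,z^k,x^k)\in\mathbb{W}$, the point $\bw^k=(\by^k,\bz^k,\bx^k)$ produced by Steps 1--3 of Algorithm \ref{alg:pADMM} satisfies $\cM(w^k-\bw^k)\in\cT\bw^k$, equivalently $\bw^k=(\cM+\cT)^{-1}\cM w^k=\hcT w^k$; once this holds, Step 4 is literally the relaxation step of \eqref{alg:dPPA}, so for a common $w^0$ the two sequences coincide. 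I would verify $\cM(w^k-\bw^k)\in\cT\bw^k$ blockwise. Step 2 gives $\sigma^{-1}(x^k-\bx^k)=c-B_1y^k-B_2\bz^k$, and adding $B_1(y^k-\by^k)$ produces exactly the third block $c-B_1\by^k-B_2\bz^k$ of $\cT\bw^k$ (an equality, since that row of $\cT$ is single-valued). Writing the first-order optimality condition of the $z$-subproblem (Step 1) and substituting Step 2 collapses it to $\cT_2(z^k-\bz^k)\in\partial f_2(\bz^k)+B_2^*\bx^k$, the second block of $\cM(w^k-\bw^k)\in\cT\bw^k$. Finally, the optimality condition of the $y$-subproblem (Step 3), after using $B_1^*(x^k-\bx^k)=-\sigma B_1^*(B_1y^k+B_2\bz^k-c)$, rearranges to $(\sigma B_1^*B_1+\cT_1)(y^k-\by^k)+B_1^*(x^k-\bx^k)\in\partial f_1(\by^k)+B_1^*\bx^k$, which is the first block; matching against the block forms \eqref{def:M}--\eqref{def:T} finishes (i).

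\emph{Admissibility of $\cM$.} Linearity, boundedness, and self-adjointness are immediate from \eqref{def:M} (the $(1,3)$ and $(3,1)$ blocks being $B_1^*$ and $B_1$). Positive semidefiniteness follows by completing the square: for $w=(y,z,x)$ one has $\langle w,\cM w\rangle=\|y\|_{\cT_1}^2+\|z\|_{\cT_2}^2+\sigma\|B_1y\|^2+2\langle B_1y,x\rangle+\sigma^{-1}\|x\|^2=\|y\|_{\cT_1}^2+\|z\|_{\cT_2}^2+\|\sqrt\sigma\,B_1y+x/\sqrt\sigma\|^2\ge0$. That $\hcT=(\cM+\cT)^{-1}\cM$ is single-valued with full domain is precisely the unique solvability of Steps 1 and 3, which is what Assumption \ref{ass: Assump-solvability} delivers: the quadratic-growth inequalities for $f_1,f_2$ give $\langle\phi-\hat\phi,u-\hat u\rangle\ge\|u-\hat u\|_{\Sigma_{f_i}}^2$ for subgradients, and combined with the elementary fact that $P\succeq0$, $A\succeq0$, $P+A\succ0$ imply $P+\sigma A\succ0$ for every $\sigma>0$ (if $(P+\sigma A)v=0$ then $Pv=Av=0$, so $(P+A)v=0$), the $z$- and $y$-objectives are strongly convex with moduli $\lambda_{\min}(\Sigma_{f_2}+\cT_2+\sigma B_2^*B_2)>0$ and $\lambda_{\min}(\Sigma_{f_1}+\cT_1+\sigma B_1^*B_1)>0$.

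\emph{Lipschitz continuity of $(\cM+\cT)^{-1}$.} Given $r=(r_1,r_2,r_3)\in\mathbb{W}$, I would solve $(\cM+\cT)w\ni r$ by back-substitution in the order of Algorithm \ref{alg:pADMM}. The third block is the affine equation $\sigma^{-1}x-B_2z+c=r_3$, so $x=\sigma(r_3-c+B_2z)$. Substituting into the second block yields $r_2-\sigma B_2^*(r_3-c)\in(\partial f_2+\cT_2+\sigma B_2^*B_2)z$; since $\Sigma_{f_2}+\cT_2+\sigma B_2^*B_2\succ0$ this operator is maximal monotone and strongly monotone, hence surjective (Minty) with an inverse that is Lipschitz with constant $1/\lambda_{\min}(\Sigma_{f_2}+\cT_2+\sigma B_2^*B_2)$, so $z$, and then $x$, are Lipschitz functions of $r$. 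Substituting into the first block gives $r_1-2B_1^*x\in(\partial f_1+\cT_1+\sigma B_1^*B_1)y$ with $\Sigma_{f_1}+\cT_1+\sigma B_1^*B_1\succ0$, so $y$ is Lipschitz in $(r_1,x)$ and hence in $r$. Composing the three Lipschitz maps shows that $(\cM+\cT)^{-1}$ is single-valued and Lipschitz on all of $\mathbb{W}$.

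\emph{Main obstacle.} Steps (i) and the admissibility part are essentially bookkeeping. The nontrivial use of Assumption \ref{ass: Assump-solvability}, and the place where care is needed, is in the Lipschitz estimate: one must upgrade positive definiteness of $\Sigma_{f_i}+B_i^*B_i+\cT_i$ to that of $\Sigma_{f_i}+\sigma B_i^*B_i+\cT_i$ for arbitrary $\sigma>0$, convert the resulting strong monotonicity of the two subproblem operators into Lipschitz continuity of their inverses, and track how these constants propagate through the back-substitution to yield a single Lipschitz constant for $(\cM+\cT)^{-1}$.
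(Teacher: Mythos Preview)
The paper does not supply its own proof of this proposition: it is quoted from \cite{sun2024accelerating} and used as a black box. Your proposal is correct and is precisely the natural route to establishing such an equivalence. The blockwise verification of $\cM(w^k-\bw^k)\in\cT\bw^k$ from the optimality conditions of Steps~1--3 and the update in Step~2 is clean, and the back-substitution argument for Lipschitz continuity of $(\cM+\cT)^{-1}$ (third block affine in $(x,z)$, then $z$ via the strongly monotone $\partial f_2+\cT_2+\sigma B_2^*B_2$, then $x$, then $y$ via $\partial f_1+\cT_1+\sigma B_1^*B_1$) is the standard way these results are proved in the cited reference.

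One small clarification you might add: your observation that $\Sigma_{f_i}+\cT_i+B_i^*B_i\succ0$ implies $\Sigma_{f_i}+\cT_i+\sigma B_i^*B_i\succ0$ for every $\sigma>0$ is correct, but the cleanest justification is via the quadratic form rather than the kernel: if $\langle v,(P+\sigma A)v\rangle=0$ with $P,A\succeq0$, then both summands vanish, hence $\langle v,(P+A)v\rangle=0$, contradicting $P+A\succ0$ unless $v=0$. Also, once you have shown that $(\cM+\cT)^{-1}$ is single-valued and globally Lipschitz on all of $\mathbb{W}$, the admissibility of $\cM$ (i.e., that $\hcT=(\cM+\cT)^{-1}\cM$ is single-valued with full domain) follows immediately, so your separate unique-solvability discussion for Steps~1 and~3 is redundant---though harmless.
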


The equivalence in Proposition \ref{prop:equ-PADMM-dPPA} allows us to use the ergodic convergence results of the dPPA in Theorem \ref{Th:convergence-ergodic-dPPA} to establish the ergodic convergence of the pADMM, including the PR splitting method ($\rho=2$) with semi-proximal terms, for solving COP \eqref{primal}, as shown in the following corollary.

\begin{corollary}\label{coro:convergence-PR}
Suppose that Assumptions \ref{ass: CQ} and \ref{ass: Assump-solvability} hold. Then the ergodic sequence \(\{\bw^k_a\} = \{(\by^k_a, \bz^k_a, \bx^k_a)\}\), generated by the pADMM with $\rho\in (0,2]$ in Algorithm \ref{alg:pADMM}, converges to the point \(w^* = (y^*, z^*, x^*)\), where \((y^*, z^*)\) is a solution to problem \eqref{primal}, and \(x^*\) is a solution to problem \eqref{dual}.
\end{corollary}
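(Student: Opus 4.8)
The plan is to apply Theorem~\ref{Th:convergence-ergodic-dPPA} directly, using the equivalence established in Proposition~\ref{prop:equ-PADMM-dPPA}. First I would verify the hypotheses of Theorem~\ref{Th:convergence-ergodic-dPPA}: Assumption~\ref{ass: CQ} guarantees that the KKT system \eqref{eq:KKT} has a solution, and by the characterization of $\cT$ in \eqref{def:T}, $w^* = (y^*,z^*,x^*)$ solves the KKT system if and only if $0 \in \cT w^*$; hence $\cT^{-1}(0) \neq \emptyset$. Assumption~\ref{ass: Assump-solvability} ensures, via Proposition~\ref{prop:equ-PADMM-dPPA}, that $\cM$ defined in \eqref{def:M} is an admissible preconditioner and that $(\cM + \cT)^{-1}$ is Lipschitz continuous. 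Thus all the conditions of Theorem~\ref{Th:convergence-ergodic-dPPA} are met for this choice of $\cT$ and $\cM$.

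Next, by Proposition~\ref{prop:equ-PADMM-dPPA}, the sequence $\{w^k\}$ generated by Algorithm~\ref{alg:pADMM} coincides with the sequence $\{w^k\}$ generated by the dPPA \eqref{alg:dPPA} with the same initial point, and consequently the auxiliary sequence $\{\bw^k\}$ (and hence the ergodic average $\{\bw^k_a\}$ defined by \eqref{def:ergodic}) from Algorithm~\ref{alg:pADMM} coincides with the corresponding dPPA quantities. Here I would just note that $\bw^k = \hcT w^k$ holds in both settings, so the two definitions of the ergodic sequence agree termwise. Applying Theorem~\ref{Th:convergence-ergodic-dPPA} with $\rho \in (0,2]$ then yields that $\{\bw^k_a\}$ converges to some $w^* = (y^*,z^*,x^*) \in \cT^{-1}(0)$.

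Finally, I would translate membership in $\cT^{-1}(0)$ back into optimality statements: since $0 \in \cT w^*$ is, by \eqref{def:T}, exactly the KKT system \eqref{eq:KKT}, the cited \cite[Corollary 28.3.1]{rockafellar1970convex} gives that $(y^*,z^*)$ is optimal for the primal problem \eqref{primal} and, by standard Lagrangian duality under Assumption~\ref{ass: CQ}, $x^*$ is optimal for the dual \eqref{dual}. This completes the argument. The proof is essentially a matter of checking that the general framework applies; the only mild subtlety is confirming that the ergodic sequences from the two algorithmic descriptions are literally the same object, which follows immediately from Proposition~\ref{prop:equ-PADMM-dPPA} once one observes that Step~1--Step~4 of Algorithm~\ref{alg:pADMM} produce $\bw^k = \hcT w^k$. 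No genuine obstacle is expected here, since all the analytical work has been done in Section~\ref{sec:2}.
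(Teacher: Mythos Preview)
Your proposal is correct and follows exactly the approach the paper intends: the corollary is stated without explicit proof, with the preceding sentence indicating that it follows directly from Theorem~\ref{Th:convergence-ergodic-dPPA} via the equivalence in Proposition~\ref{prop:equ-PADMM-dPPA}. Your write-up simply fills in those details (verifying hypotheses, identifying the sequences, and translating $\cT^{-1}(0)$ back to KKT optimality), which is precisely what is needed.
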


{
\begin{remark}
  The example in Appendix \ref{sec-example} demonstrates that the ergodic sequence \(\{w^k_a\}\) of the PR splitting method may fail to converge for general COPs \eqref{primal}. In this context, the ergodic convergence result established in Corollary \ref{coro:convergence-PR} represents the best achievable.
\end{remark}
}

\section{Numerical experiment}
\label{sec:4}
In this section, we use the following LP as an example to evaluate the performance of the ergodic sequence of the PR splitting method with semi-proximal terms:
\begin{equation}\label{model:primalLP}
\min \{ \langle c, x \rangle \mid A_1 x = b_1, \,    A_2 x \geq b_2, \,  x \in C\},
\end{equation}
where \(A_1 \in \mathbb{R}^{m_1 \times n}\), \(A_2 \in \mathbb{R}^{m_2 \times n}\), \(b_1 \in \mathbb{R}^{m_1}\), \(b_2 \in \mathbb{R}^{m_2}\), and \(c \in \mathbb{R}^n\). The set \(C := \{x \in \mathbb{R}^n \mid l \leq x \leq u\}\), with \(l \in (\mathbb{R} \cup \{-\infty\})^n\) and \(u \in (\mathbb{R} \cup \{+\infty\})^n\). Let \(A = [A_1; A_2] \in \mathbb{R}^{m \times n}\) with \(m = m_1 + m_2\) and \(b = [b_1; b_2] \in \mathbb{R}^m\). We assume that $A$ is a non-zero matrix. The dual of problem \eqref{model:primalLP} can be expressed as:
\begin{equation}\label{model:dualLP}
\min \{ -\langle b, y \rangle +  \delta_{D}(y)+
  \delta_{C}^{*}(-z) \mid  A^{*} y + z = c,\, y\in \mathbb{R}^{m},\, z\in\mathbb{R}^{n}\},
\end{equation}
where $\delta_{D}(\cdot)$ is the indicator function over \(D:= \{ y = (y_1, y_2) \in \mathbb{R}^{m_1} \times \mathbb{R}^{m_2}_{+}\}\).

We apply the pADMM from Algorithm \ref{alg:pADMM} with \(\mathcal{T}_1 = \sigma(\lambda_1(AA^*)I_m - AA^*)\) and \(\mathcal{T}_2 = 0\) to solve problem \eqref{model:dualLP}, where \(I_m\) is the identity matrix in \(\mathbb{R}^m\). In this experiment, the pADMM with $\rho=2$ and $\rho=1$ are denoted as “PR” and “DR,” respectively, with their ergodic sequences referred to as “EPR” and “EDR.” To enhance the performance of the ergodic sequences, we apply the restart strategy from \cite{applegate2021practical,chen2024hpr}, using the following merit function based on primal and dual infeasibility:
\[
\widetilde{R}_k = \sqrt{\sigma^{-1} \|\Pi_D(b - A \bar{x}_a^k)\|^2 + \sigma\|c - A^* \bar{y}_a^k - \bar{z}_a^k\|^2}.
\]
The restarted variants are referred to as ``rEPR" and ``rEDR". We implement all tested algorithms in Julia and run them on an NVIDIA A100-SXM4-80GB GPU with CUDA 12.3. The tested algorithms terminate when the following ``optimality" measure falls below a specified tolerance \(\epsilon > 0\):
\[ \max \left\{ 
\frac{| \langle b, y \rangle - \delta^*_C(-z) - \langle c, x \rangle |}{1 + |\langle b, y \rangle - \delta^*_C(-z)| + |\langle c, x \rangle|},  
\frac{\|\Pi_D(b - Ax)\|}{1 + \|b\|},  
\frac{\|c - A^* y - z\|}{1 + \|c\|} 
\right\} \leq \epsilon.
\]
Figure \ref{fig:ex10_sub3} illustrates the performance comparison of tested algorithms with \(\sigma = 1\) on the ``ex10" instance from Mittelmann’s LP benchmark. In particular, the left subfigure indicates that PR does not necessarily converge, while the middle subfigure shows that EPR does converge. Furthermore, to reach a solution with a tolerance of \(10^{-2.5}\), EPR requires roughly half of the iterations of EDR, {which is consistent with the ratio of the upper bounds for the ergodic iteration complexity results of the PR and DR splitting methods, as shown in \eqref{Th:complexity-subdiff} and \eqref{Th:complexity-obj} in Appendix \ref{sec-ergodic complexity}.} More importantly, with the restart strategy, rEPR significantly outperforms both DR and EDR.

\begin{figure}[htp]
    \centering
    \includegraphics[width=0.9\linewidth]{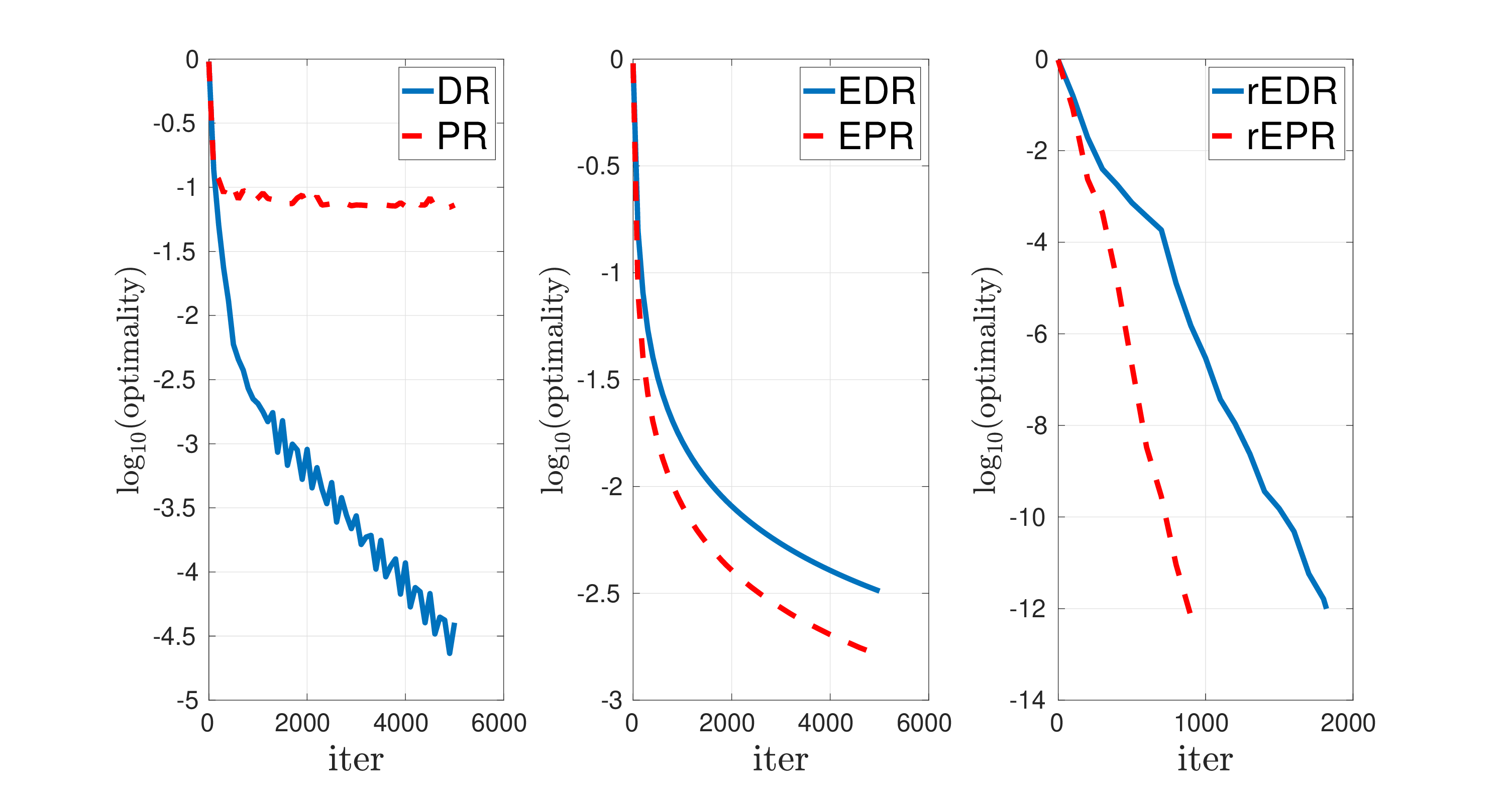}
    \caption{Performance comparison of algorithms ($\sigma=1$) on the “ex10” instance from Mittelmann’s LP benchmark}
    \label{fig:ex10_sub3}
\end{figure}

To further evaluate rEPR, rEDR, and DR, we test them on 49 instances from Mittelmann’s LP benchmark set. Each algorithm runs with a tolerance of \(\epsilon = 10^{-8}\) and a 3,600-second time limit. Figure \ref{fig:Hans_solved_time_sigmaFixed} shows performance profiles for solving times. We observe that rEPR is the fastest solver for 50\% of the problems, solves 60\% of them using half of the time required by rEDR, and solves 20\% more problems than DR. In summary, with the restart strategy, the ergodic sequence of the PR splitting method with semi-proximal terms outperforms both the point-wise and ergodic sequences of the DR splitting method with semi-proximal terms.
\begin{figure}[htp]
    \centering\includegraphics[width=0.5\linewidth]{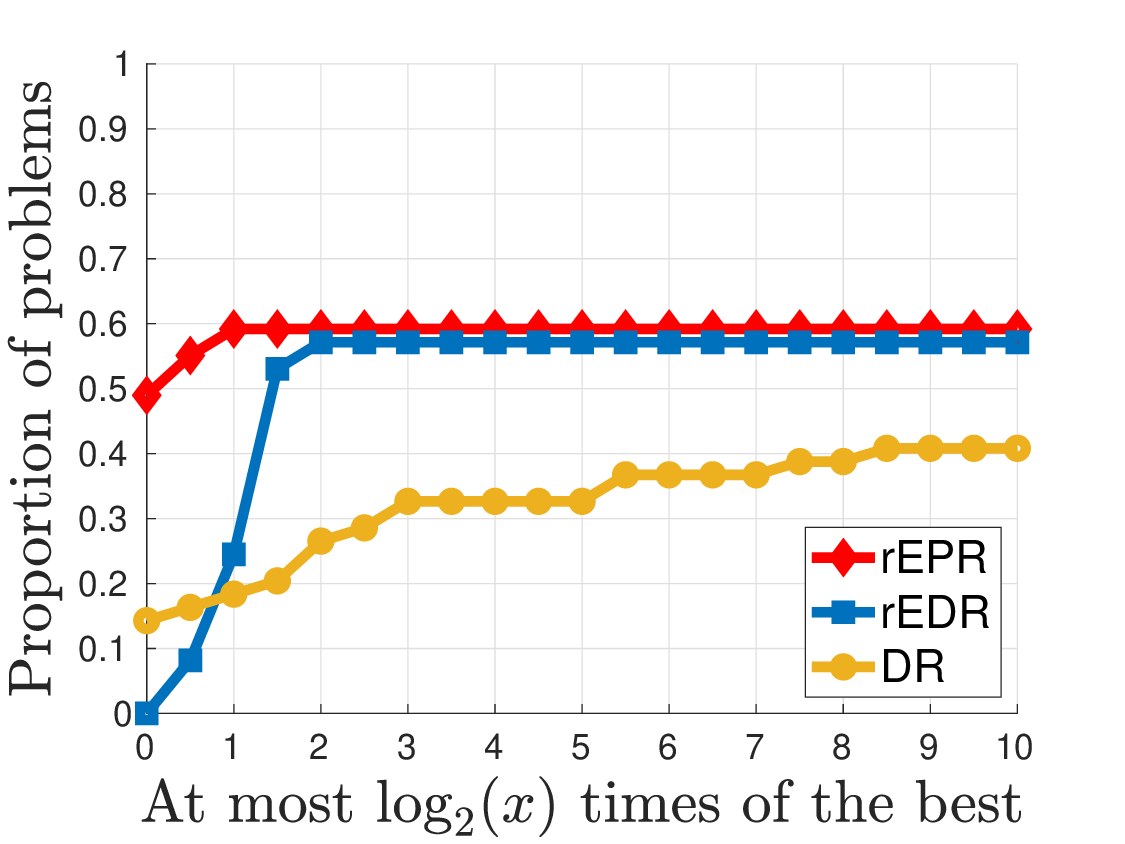}
    \caption{Performance profiles of solving times for tested algorithms ($\sigma=1$) on 49 instances from Mittelmann’s LP benchmark set}
    \label{fig:Hans_solved_time_sigmaFixed}
\end{figure}
\section{Conclusion} \label{sec:5}
To establish the ergodic convergence of the PR splitting method for solving the COP \eqref{primal}, we first proved the ergodic convergence of the dPPA with $\rho \in (0, 2]$. Leveraging the equivalence between the pADMM and the dPPA, we then demonstrated the ergodic convergence of the pADMM with $\rho \in (0, 2]$, including the PR splitting method with semi-proximal terms. Numerical results on the LP benchmark dataset revealed that, with a restart strategy, the ergodic sequence of the PR splitting method with semi-proximal terms consistently outperforms both the point-wise and ergodic sequences of the DR splitting method with semi-proximal terms. These results suggested that the restarted ergodic PR splitting method might be a more effective approach for solving large-scale COPs compared to its DR counterparts. For future research, it would be interesting to investigate why the restart strategy significantly enhances the performance of the PR splitting method’s ergodic sequence.

%\begin{acknowledgements}
%If you'd like to thank anyone, place your comments here
%and remove the percent signs.
%\end{acknowledgements}

\section*{Statements and Declarations}
\noindent\textbf{Funding} {\thanks{The work of Defeng Sun was supported by the Research Center for Intelligent Operations Research, RGC Senior Research Fellow Scheme No. SRFS2223-5S02, and  GRF Project No. 15304721. The work of Yancheng Yuan was supported by Early Career Scheme Project No. 25305424 and the Research Center for Intelligent Operations Research. The work of Xinyuan Zhao was supported in part by the National Natural Science Foundation of China under Project No. 12271015.}}

\noindent\textbf{Conflict of interest}   The authors declare that they have no conflict of interest.

% Authors must disclose all relationships or interests that 
% could have direct or potential influence or impart bias on 
% the work: 
%
% \section*{Conflict of interest}
%
% The authors declare that they have no conflict of interest.

% BibTeX users please use one of
%\bibliographystyle{spbasic}      % basic style, author-year citations
% \bibliographystyle{plainnat}
\bibliographystyle{spmpsci}      % mathematics and physical sciences
\bibliography{ePR_ref}   % name your BibTeX data base

% Non-BibTeX users please use
%\begin{thebibliography}{}
% %
% % and use \bibitem to create references. Consult the Instructions
% % for authors for reference list style.
% %
% \bibitem{RefJ}
% % Format for Journal Reference
% Author, Article title, Journal, Volume, page numbers (year)
% % Format for books
% \bibitem{RefB}
% Author, Book title, page numbers. Publisher, place (year)
% % etc
% \end{thebibliography}

\appendix

\section{An analytical example of the ergodic convergence of the PR splitting method}\label{sec-example}

The following example illustrates the challenge of analyzing the ergodic sequence of the PR splitting method.

\begin{example}
    \begin{equation}\label{example-1}
	\begin{array}{cc}
		\displaystyle\min _{y \in \mathbb{R} , z \in \mathbb{R}} & \quad  \ \delta_{\{0\}}(z) \\
		\text{s.t.} &\displaystyle  y + z = 0.
	\end{array}
\end{equation}
 The optimal solution to problem \eqref{example-1} and its dual problem is \((y^*, z^*, x^*) = (0, 0, 0)\). Choosing \(\mathcal{T}_1 = 0\), \(\mathcal{T}_2 = 0\), and initializing with \((y^0, z^0, x^0) = (1, 0, 1)\) and \(\sigma = 1\), we apply the pADMM in Algorithm \ref{alg:pADMM} to solve problem \eqref{example-1}. Through direct calculations, we obtain the following for any \(k \geq 0\):
\begin{equation*}
 \left\{
\begin{array}{l}
     \by^{k}=-2(1-\rho)^{k}, \\
     \bz^k=0, \\
     \bx^k=2(1-\rho)^{k}, \\
     y^k=-2k\rho(1-\rho)^{k-1}+(1-\rho)^k, \\
     z^k=0, \\
     x^k = 2k\rho(1-\rho)^{k-1}+(1-\rho)^k, \\
\end{array}
\right. 
\text{ and }
\left\{
\begin{array}{l}
\by_a^k=-\frac{2(1-(1-\rho)^{k+1})}{\rho(k+1)}, \\
\bz_a^k=0, \\
\bx_a^k=\frac{2(1-(1-\rho)^{k+1})}{\rho(k+1)}, \\
y_a^k=2(1-\rho)^k-\frac{1-(1-\rho)^{k+1}}{\rho(k+1)}, \\
z_a^k=0, \\
x_a^k=-2(1-\rho)^k+\frac{3(1-(1-\rho)^{k+1})}{\rho(k+1)},
\end{array}
\right.
\end{equation*}
where $0^0$ is defined as $1$. In particular, if $\rho = 2$, then for $k\geq0$, we have
\begin{equation*}
 \left\{
\begin{array}{l}
     \by^{k}=-2(-1)^{k}, \\
     \bz^k=0, \\
     \bx^k=2(-1)^{k}, \\
     y^k=-4k(-1)^{k-1}+(-1)^k, \\
     z^k=0, \\
     x^k = 4k(-1)^{k-1}+(-1)^k, \\
\end{array}
\right.
\text{ and }
\left\{
\begin{array}{l}
\by_a^k=-\frac{1-(-1)^{k+1}}{k+1}, \\
\bz_a^k=0, \\
\bx_a^k=\frac{1-(-1)^{k+1}}{k+1}, \\
y_a^k=2(-1)^k-\frac{1-(-1)^{k+1}}{2(k+1)}, \\
z_a^k=0, \\
x_a^k=-2(-1)^k+\frac{3(1-(-1)^{k+1})}{2(k+1)}. 
\end{array}
\right.   
\end{equation*}
% where $0^0$ is defined as $1$ for the cases $\rho = 1$ and $k = 0,1$.
It is evident that the point-wise sequence \(\{(\bx^k, \by^k)\}\) of the PR splitting method \((\rho = 2)\)  oscillates, while \(\{(x^k, y^k)\}\) diverges to infinity. In contrast, the ergodic sequence \(\{(\bx_a^k, \by_a^k)\}\) of the PR splitting method converges to the optimal solution, whereas \(\{(x_a^k, y_a^k)\}\) still diverges. Moreover, the ergodic sequence \(\{(\bx_a^k, \by_a^k)\}\) of the PR splitting method performs better than that of the DR splitting method (\(\rho = 1\)), in the sense that the odd ergodic iterations of the PR splitting method reach the solution directly: 
$$
\displaystyle  (\bx_a^k, \by_a^k)=\left\{
\begin{array}{lccccrc}
     (&-\frac{1-(-1)^{k+1}}{k+1}&,&\frac{1-(-1)^{k+1}}{k+1}&) , \quad &\rho=2,\vspace{5pt}\\
     (&-\frac{2}{k+1}&,&\frac{2}{k+1}&) ,\quad &\rho=1.
 \end{array}
 \right.
$$
\end{example}
In summary, this example illustrates that the PR splitting method may fail to achieve point-wise convergence when solving COP \eqref{primal}, while clearly highlighting the superiority of its ergodic sequences compared to those of the DR splitting method. Moreover, it reveals that the ergodic sequence $\{w^k_a\}$  of the PR splitting method does not necessarily converge for general COPs \eqref{primal}. In this sense, the ergodic convergence result of the PR splitting method established in Corollary \ref{coro:convergence-PR} represents the best achievable outcome.

\section{Ergodic iteration complexity of the PR splitting method for COPs}\label{sec-ergodic complexity}

To theoretically justify the superior performance of the ergodic PR splitting method compared to the ergodic DR splitting method, this section focuses on analyzing the ergodic iteration complexity of the pADMM framework, which includes the PR splitting method. We begin by introducing the concept of the \(\varepsilon\)-subgradient of a convex function \(f\) \cite{rockafellar1970convex}:
\begin{definition}
    Let \(f: \mathbb{X} \to (-\infty, +\infty]\) be a proper convex function, and let \(\bar{x} \in \operatorname{dom}(f)\). Given \(\varepsilon \geq 0\), the \(\varepsilon\)-subgradient of \(f\) at \(\bar{x}\) is defined as
\[
\partial_{\varepsilon} f(\bar{x}) := \left\{x^* \in \mathbb{X}^* \mid \langle x^*, x - \bar{x} \rangle \leq f(x) - f(\bar{x}) + \varepsilon, \, \forall x \in \mathbb{X}\right\}.
\]
\end{definition}
Using the optimality conditions of each subproblem in the pADMM and the concept of \(\varepsilon\)-subgradient, we derive the following lemma for the ergodic sequence \(\{\bar{w}^k_a\}\).

\begin{lemma}\label{lemma:epsilon-sub}
Suppose that Assumptions \ref{ass: CQ} and \ref{ass: Assump-solvability} hold. Let $w^*=(y^*,z^*,x^*)$ be a solution to the KKT system \eqref{eq:KKT}. Then the sequence $\{(\by^{k}_{a},\bz^{k}_{a},\bx^{k}_{a})\}$  generated by the pADMM in Algorithm \ref{alg:pADMM} with $\rho\in (0,2]$ satisfies for any $k\geq 0$, 
\begin{equation*}
 \left\{
\begin{array}{l}
\displaystyle -B_{2}^{*}\bx^{k}_{a}-\cT_2(\bz^{k}_{a}-z^{k}_{a})\in \partial_{\bar{\varepsilon}_{z}^{k}} f_{2}(\bz^{k}_a),\\
\displaystyle -B_{1}^{*}(\bx^{k}_{a}+ \sigma (B_{1}{\by}^{k}_{a}+B_{2}\bz^{k}_{a}-c)) -\cT_1(\by^{k}_{a}-y^{k}_{a})\in \partial_{\bar{\varepsilon}_{y}^{k}} f_{1}(\by^{k}_a), 
\end{array} \right.   
\end{equation*}
where 
\begin{equation}\label{def:epsilon-y-z}
\left\{
\begin{array}{ll}
\displaystyle \bar{\varepsilon}_{z}^{k}=\frac{1}{k+1}\sum_{t=0}^{k} \langle- B_{2}^{*}\bx^{t}-\cT_2(\bz^{t}-z^{t}), \bz^{t}-\bz^{k}_{a}  \rangle \geq 0,\\
\displaystyle \bar{\varepsilon}_{y}^{k}=\frac{1}{k+1}\sum_{t=0}^{k}\langle-B_{1}^{*}(\bx^{t}+ \sigma (B_{1}{\by}^{t}+B_{2}\bz^{t}-c))-\cT_1(\by^{t}-y^{t}), \by^{t}-\by^{k}_{a}  \rangle \geq 0,
\end{array} \right.
\end{equation}
and
\begin{equation}\label{epsilon-upper-bound}
\begin{array}{ll}
\displaystyle \bar{\varepsilon}_{z}^{k}+ \bar{\varepsilon}_{y}^{k}\leq \frac{1}{2\rho(k+1)} \|w^{0}-w^{*}\|_{\cM}^2.
\end{array}
\end{equation}
\end{lemma}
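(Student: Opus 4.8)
The plan is to extract, from the optimality conditions of Steps 1 and 3 of the pADMM, pointwise $\varepsilon$-subgradient-type relations and then average them, mimicking the telescoping argument used in the proof of Proposition~\ref{prop: ergodic-rateofdPPA}(b). First I would write down the first-order optimality condition of the $z$-subproblem in Step~1: $-B_2^*\bx^t - \mathcal{T}_2(\bz^t - z^t) \in \partial f_2(\bz^t)$, using that $\bx^t = x^t + \sigma(B_1 y^t + B_2\bz^t - c)$ from Step~2, and similarly for the $y$-subproblem in Step~3: $-B_1^*(\bx^t + \sigma(B_1\by^t + B_2\bz^t - c)) - \mathcal{T}_1(\by^t - y^t) \in \partial f_1(\by^t)$. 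From the definition of the subdifferential, $v^t \in \partial f_2(\bz^t)$ means $f_2(z) \ge f_2(\bz^t) + \langle v^t, z - \bz^t\rangle$ for all $z$; summing over $t = 0,\dots,k$ with weights $\frac{1}{k+1}$ and using convexity of $f_2$ (so that $\frac{1}{k+1}\sum_t f_2(\bz^t) \ge f_2(\bz^k_a)$) gives, after rearranging, that $\frac{1}{k+1}\sum_t v^t \in \partial_{\bar\varepsilon_z^k} f_2(\bz^k_a)$ with exactly the $\bar\varepsilon_z^k$ displayed in \eqref{def:epsilon-y-z}; the averaged subgradient $\frac{1}{k+1}\sum_t v^t$ must then be massaged into $-B_2^*\bx^k_a - \mathcal{T}_2(\bz^k_a - z^k_a)$, which follows by linearity of $B_2^*$, $\mathcal{T}_2$ and of the averaging operator. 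The same steps applied to $f_1$ give the second inclusion.

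Next I would verify $\bar\varepsilon_z^k \ge 0$ and $\bar\varepsilon_y^k \ge 0$. For this I would observe that each $\bar\varepsilon$ is a Cesàro-type quantity of the form $\frac{1}{k+1}\sum_t \langle v^t, \bz^t - \bz^k_a\rangle$ where $v^t \in \partial f_2(\bz^t)$; by the monotonicity of $\partial f_2$ one has $\langle v^t - v^s, \bz^t - \bz^s\rangle \ge 0$, and a standard identity shows $\frac{1}{k+1}\sum_t \langle v^t, \bz^t - \bz^k_a\rangle = \frac{1}{2(k+1)^2}\sum_{t,s}\langle v^t - v^s, \bz^t - \bz^s\rangle \ge 0$. (Alternatively, one can invoke $\partial f_2 = \partial f_2^{\,0}$ and repeat the contradiction argument via maximality used for $\bar\varepsilon_a^k$ in Proposition~\ref{prop: ergodic-rateofdPPA}, but the algebraic identity is cleaner.)

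The main work is the upper bound \eqref{epsilon-upper-bound}. Here I expect to exploit the dPPA/pADMM equivalence (Proposition~\ref{prop:equ-PADMM-dPPA}): the key is that $\bar\varepsilon_z^k + \bar\varepsilon_y^k$ should coincide with — or be dominated by — the quantity $\bar\varepsilon_a^k$ from Proposition~\ref{prop: ergodic-rateofdPPA}(b), which is already bounded by $\frac{1}{2\rho(k+1)}\|w^0 - w^*\|_{\mathcal{M}}^2$. Concretely, writing out $\langle \bw^t - \bw^k_a, w^t - \bw^t\rangle_{\mathcal{M}}$ using the block structure of $\mathcal{M}$ in \eqref{def:M} and the identities $\bx^t = x^t + \sigma(B_1 y^t + B_2\bz^t - c)$, I expect the $\mathbb{X}$- and $\mathbb{Y}$-blocks of $\mathcal{M}(w^t - \bw^t)$ to reproduce exactly the vectors $-B_2^*\bx^t - \mathcal{T}_2(\bz^t - z^t)$ and $-B_1^*(\bx^t + \sigma(B_1\by^t + B_2\bz^t - c)) - \mathcal{T}_1(\by^t - y^t)$ appearing in \eqref{def:epsilon-y-z} (this is essentially how $\mathcal{T}$ in \eqref{def:T} was constructed), so that $\bar\varepsilon_z^k + \bar\varepsilon_y^k = \bar\varepsilon_a^k$. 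Then \eqref{epsilon-upper-bound} is immediate from Proposition~\ref{prop: ergodic-rateofdPPA}(b). The delicate point — and the step I expect to be the main obstacle — is carrying out this block-by-block identification carefully, making sure the cross term $B_1^*$ in the $(1,3)$ and $(3,1)$ entries of $\mathcal{M}$ and the $\sigma^{-1}\mathcal{I}$ in the $(3,3)$ entry combine correctly with the Step~2 update so that the $\mathcal{M}$-inner product expands into exactly the two pieces $\bar\varepsilon_z^k$ and $\bar\varepsilon_y^k$ and no residual term is left over; any discrepancy would have to be absorbed, which would weaken the clean identity into an inequality but still yield \eqref{epsilon-upper-bound}.
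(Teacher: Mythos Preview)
Your proposal is correct and follows essentially the paper's approach: optimality conditions of Steps~1 and~3, averaging, convexity to obtain the $\varepsilon$-subgradient inclusions, and then the identification $\bar\varepsilon_z^k + \bar\varepsilon_y^k = \bar\varepsilon_a^k$ so that Proposition~\ref{prop: ergodic-rateofdPPA}(b) gives \eqref{epsilon-upper-bound}. Two small remarks: the paper proves non-negativity more directly by substituting $z=\bz^k_a$ (resp.\ $y=\by^k_a$) in the subgradient inequality and using Jensen, rather than your double-sum monotonicity identity; and the block-by-block identification does \emph{not} reproduce the two subgradient vectors exactly---the $y$-, $z$-, and $x$-blocks of $\mathcal{M}(w^t-\bw^t)$ contribute extra pieces $\langle B_1^*\bx^t,\by^t-\by^k_a\rangle+\langle B_2^*\bx^t,\bz^t-\bz^k_a\rangle+\langle c-B_1\by^t-B_2\bz^t,\bx^t-\bx^k_a\rangle$, but these sum to zero over $t=0,\dots,k$ (a two-line bilinear cancellation), so one gets the clean equality $\bar\varepsilon_z^k+\bar\varepsilon_y^k=\bar\varepsilon_a^k$ rather than merely an inequality.
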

\begin{proof}
From the optimality conditions of the subproblems in Algorithm \ref{alg:pADMM}, we have, for any \(t \geq 0\),
\[
\left\{
\begin{array}{l}
  \displaystyle  f_2(z) \geq f_2(\bz^t) + \langle -B_2^{*} \bx^{t} - \mathcal{T}_2 (\bz^{t} - z^{t}), z - \bz^t \rangle, \quad \forall z \in \mathbb{Z}, \\
 \displaystyle   f_1(y) \geq f_1(\by^t) + \langle -B_1^{*} (\bx^t + \sigma (B_1 \by^t + B_2 \bz^t - c)) - \mathcal{T}_1 (\by^t - y^t), y - \by^t \rangle, \quad \forall y \in \mathbb{Y}.
\end{array}
\right.
\]
Summing these from \(t = 0\) to \(k\), and dividing by \(k+1\), we obtain for any $z \in \mathbb{Z}$ and $y \in \mathbb{Y}$,
\[\left\{
\begin{array}{ll}
    f_2(z) & \displaystyle \geq  \frac{1}{k+1} \sum_{t=0}^{k} \Big( f_2(\bz^t) + \langle -B_2^{*} \bx^t - \mathcal{T}_2 (\bz^t - z^t), z - \bz^t \rangle \Big), \\
    f_1(y) &\displaystyle \geq    \frac{1}{k+1} \sum_{t=0}^{k} \Big(f_1(\by^t) +\langle -B_1^{*} (\bx^t + \sigma (B_1 \by^t + B_2 \bz^t - c)) - \mathcal{T}_1 (\by^t - y^t), y - \by^t \rangle \Big).
\end{array}\right.
\]
By the convexity of $f_2(\cdot)$, we have for any $z\in \mathbb{Z}$,
\[				\begin{array}{ll}
                       f_2(z)&\displaystyle \geq f_2(\bz^k_{a})+\frac{1}{k+1}\sum_{t=0}^{k} \langle- B_{2}^{*}\bx^{t}-\cT_2(\bz^{t}-z^{t}), z-\bz^{t}  \rangle \\
                       &\displaystyle =f_2(\bz^k_{a})+\frac{1}{k+1}\sum_{t=0}^{k} \langle- B_{2}^{*}\bx^{t}-\cT_2(\bz^{t}-z^{t}), z-\bz^{k}_{a}\rangle -\bar{\varepsilon}_z^{k}, 	
				\end{array}
\]
where 
$$\bar{\varepsilon}_{z}^{k}=\frac{1}{k+1}\sum_{t=0}^{k} \langle- B_{2}^{*}\bx^{t}-\cT_2(\bz^{t}-z^{t}), \bz^{t}-\bz^{k}_{a}  \rangle$$
is non-negative by substituting $z=\bz^{k}_{a}$ in the first inequality. Hence, we have  for any $k \geq 0$,  
$$
 \displaystyle -(B_{2}^{*}\bx^{k}_{a}+\cT_2(\bz^{k}_{a}-z^{k}_{a}))=\frac{1}{k+1}\sum_{t=0}^{k} - (B_{2}^{*}\bx^{t}+\cT_2(\bz^{t}-z^{t}))\in \partial_{\bar{\varepsilon}_{z}^{k}} f_{2}(\bz^{k}_a).
$$
Similarly, for $f_1 (\cdot)$, we obtain that for all $k\geq 0$,
$$
-B_{1}^{*}(\bx^{k}_{a}+ \sigma (B_{1}{\by}^{k}_{a}+B_{2}\bz^{k}_{a}-c)) -\cT_1(\by^{k}_{a}-y^{k}_{a})\in \partial_{\bar{\varepsilon}_{y}^{k}} f_{1}(\by^{k}_a), \ 
$$
where 
$$
\displaystyle \bar{\varepsilon}_{y}^{k}=\frac{1}{k+1}\sum_{t=0}^{k}\langle-B_{1}^{*}(\bx^{t}+ \sigma (B_{1}{\by}^{t}+B_{2}\bz^{t}-c))-\cT_1(\by^{t}-y^{t}), \by^{t}-\by^{k}_{a}  \rangle \geq 0.
$$
Now, we show the upper bound of  $\bar{\varepsilon}_z^k+\bar{\varepsilon}_y^k$ for all $k\geq0$. According to the definitions of $\bar{\varepsilon}_z^k$ 
 and $\bar{\varepsilon}_y^k$ in \eqref{def:epsilon-y-z},  Step 2 of Algorithm \ref{alg:pADMM}, and the definition of $\cM$ in \eqref{def:M}, we obtain for any $k\geq 0$, 
\begin{equation*}
    \begin{array}{ll}
&\displaystyle \bar{\varepsilon}_{z}^{k}+ \bar{\varepsilon}_{y}^{k}\\
=&\displaystyle\frac{1}{k+1}\sum_{t=0}^{k} \Big( \langle- B_{2}^{*}\bx^{t}-\cT_2(\bz^{t}-z^{t}), \bz^{t}-\bz^{k}_{a} \rangle \\
 & \qquad \qquad \qquad +\langle-B_{1}^{*}(\bx^{t}+ \sigma (B_{1}{\by}^{t}+B_{2}\bz^{t}-c))-\cT_1(\by^{t}-y^{t}), \by^{t}-\by^{k}_{a}  \rangle \Big)\\
=&\displaystyle \frac{1}{k+1}\sum_{t=0}^{k} \Big( \langle \cM (w^t-\bw^t), \bw^{t} - \bw^k_{a}  \rangle \\ & \qquad \qquad \qquad -\langle B_{1}^{*}\bx^{t}, \by^{t} - \by^k_{a}  \rangle  -\langle B_{2}^{*}\bx^{t}, \bz^{t} - \bz^k_{a}  \rangle -\langle c-B_1\by^{t}-B_2\bz^{t}, \bx^{t}-\bx^{k}_a  \rangle        \Big)\\
=&\displaystyle \frac{1}{k+1}\sum_{t=0}^{k} \langle \cM (w^t-\bw^t), \bw^{t} - \bw^k_{a}  \rangle.
    \end{array}
\end{equation*}
Thus, by the definition of  $\bar{\varepsilon}_{a}^{k}$ in Proposition \ref{prop: ergodic-rateofdPPA}, and the equivalence between the pADMM and the dPPA in Proposition \ref{prop:equ-PADMM-dPPA}, we can derive  
$$\bar{\varepsilon}_{z}^{k}+ \bar{\varepsilon}_{y}^{k}=\bar{\varepsilon}_{a}^{k}\leq  \frac{1}{2\rho(k+1)} \|w^{0}-w^{*}\|_{\cM}^2.$$
This completes the proof. 
\end{proof}
To estimate the objective error, we define
\[
h(\bar{y}^k_a, \bar{z}^k_a) := f_1(\bar{y}^k_a) + f_2(\bar{z}^k_a) - f_1(y^*) - f_2(z^*), \quad \forall k \geq 0,
\]
where \((y^*, z^*)\) is a solution to the COP \eqref{primal}. Based on the ergodic properties established in Lemma \ref{lemma:epsilon-sub}, we derive the following iteration complexity with respect to the objective error, the feasibility violation, and the KKT residual based on $\varepsilon$-subdifferential in Theorem \ref{Th:complexity-pADMM-ergodic-epsilon-subdiff}.

\begin{theorem}\label{Th:complexity-pADMM-ergodic-epsilon-subdiff}
Suppose that Assumptions \ref{ass: CQ} and \ref{ass: Assump-solvability} hold. Let \(w^* = (y^*, z^*, x^*)\) be a solution to the KKT system \eqref{eq:KKT}, and define \(R_0 = \|w^0 - w^*\|_{\mathcal{M}}\). 
Then, the sequence \(\{(\bar{y}^k_a, \bar{z}^k_a, \bar{x}^k_a)\}\) generated by the pADMM in Algorithm \ref{alg:pADMM} with \(\rho \in (0,2]\) satisfies the following iteration complexity bounds for all \(k \geq 0\):
\begin{equation}\label{Th:complexity-subdiff}
\begin{array}{ll}
    &\displaystyle \displaystyle\operatorname{dist}\big(0, \partial_{\bar{\varepsilon}_y^k} f_1(\bar{y}^k_a) + B_1^* \bar{x}^k_a \big) 
    + \operatorname{dist}\big(0, \partial_{\bar{\varepsilon}_z^k} f_2(\bar{z}^k_a) + B_2^* \bar{x}^k_a\big) 
    + \|B_1 \bar{y}^k_a + B_2 \bar{z}^k_a - c\| \\
    &\displaystyle \leq \Big( \frac{\sigma \|B_1^*\| + 1}{\sqrt{\sigma}} + \|\sqrt{\mathcal{T}_2}\| + \|\sqrt{\mathcal{T}_1}\| \Big) \frac{2R_0}{\rho(k+1)},
\end{array}
\end{equation}
where \(\bar{\varepsilon}_z^k + \bar{\varepsilon}_y^k \leq \frac{1}{2\rho(k+1)} \|w^0 - w^*\|_{\mathcal{M}}^2\). Moreover, the following bound holds for the objective function:
\begin{equation}\label{Th:complexity-obj}
\begin{array}{ll}
    \big(-\frac{\|x^*\|}{\sqrt{\sigma}}\big) \frac{2R_0}{\rho(k+1)} 
    &\displaystyle \leq h(\bar{y}^k_a, \bar{z}^k_a) \\
    &\displaystyle  \leq \big(R_0 + 4 \sqrt{\sigma} \|B_1 y^*\|\big) \frac{R_0}{2\rho(k+1)} + \frac{\|x^0 + \sigma B_1 y^0\|^2}{2\rho(k+1)}.
\end{array}
\end{equation}
\end{theorem}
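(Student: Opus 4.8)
The plan is to derive all three complexity bounds in \eqref{Th:complexity-subdiff} and \eqref{Th:complexity-obj} from the ergodic estimates already collected in Lemma \ref{lemma:epsilon-sub} and Proposition \ref{prop: ergodic-rateofdPPA}, together with the structure of $\mathcal{M}$ in \eqref{def:M}. For the subdifferential and feasibility bound \eqref{Th:complexity-subdiff}, I would start from the two $\varepsilon$-subgradient inclusions in Lemma \ref{lemma:epsilon-sub}: the $z$-inclusion says $-B_2^*\bar x_a^k - \cT_2(\bar z_a^k - z_a^k) \in \partial_{\bar\varepsilon_z^k} f_2(\bar z_a^k)$, so $\operatorname{dist}(0,\partial_{\bar\varepsilon_z^k}f_2(\bar z_a^k) + B_2^*\bar x_a^k) \le \|\cT_2(\bar z_a^k - z_a^k)\| = \|\sqrt{\cT_2}\sqrt{\cT_2}(\bar z_a^k - z_a^k)\| \le \|\sqrt{\cT_2}\|\,\|\bar z_a^k - z_a^k\|_{\cT_2}$. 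The $y$-inclusion, after moving the $\sigma B_1^*(B_1\bar y_a^k + B_2\bar z_a^k - c)$ term to the residual side, gives $\operatorname{dist}(0,\partial_{\bar\varepsilon_y^k}f_1(\bar y_a^k) + B_1^*\bar x_a^k) \le \sigma\|B_1^*\|\,\|B_1\bar y_a^k + B_2\bar z_a^k - c\| + \|\sqrt{\cT_1}\|\,\|\bar y_a^k - y_a^k\|_{\cT_1}$. Then I need $\|B_1\bar y_a^k + B_2\bar z_a^k - c\|$, $\|\bar y_a^k - y_a^k\|_{\cT_1}$, $\|\bar z_a^k - z_a^k\|_{\cT_2}$, and $\sigma^{-1/2}\|\bar x_a^k - x_a^k\|$ all controlled; the key observation is that from the block form of $\cM$, the quantity $\|\bar w_a^k - w_a^k\|_{\cM}^2$ expands as $\|\sqrt{\cT_1 + \sigma B_1^*B_1}(\bar y_a^k - y_a^k) + \text{(cross term with }x\text{)}\|^2 + \dots$; more usefully, $\sigma^{-1}\|B_1(\bar y_a^k - y_a^k) + (\bar x_a^k - x_a^k)\|^2 \le \|\bar w_a^k - w_a^k\|_{\cM}^2$ and $\|\bar y_a^k - y_a^k\|_{\cT_1}^2 + \|\bar z_a^k - z_a^k\|_{\cT_2}^2 \le \|\bar w_a^k - w_a^k\|_{\cM}^2$. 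Meanwhile Step 2 of Algorithm \ref{alg:pADMM} together with Step 4 (averaging is linear, so it commutes with the affine Step-2 update) yields $\bar x_a^k = x_a^k + \sigma(B_1 y_a^k + B_2 \bar z_a^k - c)$, hence $B_1\bar y_a^k + B_2\bar z_a^k - c = \sigma^{-1}(\bar x_a^k - x_a^k) + B_1(\bar y_a^k - y_a^k)$, so $\sqrt\sigma\,\|B_1\bar y_a^k + B_2\bar z_a^k - c\| = \sigma^{-1/2}\|B_1(\bar y_a^k - y_a^k) + (\bar x_a^k - x_a^k)\| \le \|\bar w_a^k - w_a^k\|_{\cM}$. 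Combining the three pieces and invoking part (a) of Proposition \ref{prop: ergodic-rateofdPPA}, $\|\bar w_a^k - w_a^k\|_{\cM} \le \frac{2R_0}{\rho(k+1)}$, gives exactly the coefficient $\big(\frac{\sigma\|B_1^*\| + 1}{\sqrt\sigma} + \|\sqrt{\cT_2}\| + \|\sqrt{\cT_1}\|\big)$; the $\varepsilon$-bound $\bar\varepsilon_y^k + \bar\varepsilon_z^k \le \frac{R_0^2}{2\rho(k+1)}$ is already in Lemma \ref{lemma:epsilon-sub}.

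For the objective bound \eqref{Th:complexity-obj}, the lower bound is the easy direction: by the KKT system $-B_1^*x^* \in \partial f_1(y^*)$, $-B_2^*x^* \in \partial f_2(z^*)$, and convexity, $h(\bar y_a^k, \bar z_a^k) \ge \langle -B_1^*x^*, \bar y_a^k - y^* \rangle + \langle -B_2^*x^*, \bar z_a^k - z^* \rangle = -\langle x^*, B_1\bar y_a^k + B_2\bar z_a^k - c \rangle$ (using $B_1 y^* + B_2 z^* = c$), which is bounded below by $-\|x^*\|\,\|B_1\bar y_a^k + B_2\bar z_a^k - c\| \ge -\|x^*\| \cdot \sigma^{-1/2}\frac{2R_0}{\rho(k+1)}$ from the feasibility bound just derived. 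The upper bound needs the two $\varepsilon$-subgradient inclusions of Lemma \ref{lemma:epsilon-sub} evaluated at $(y^*, z^*)$: $f_1(\bar y_a^k) \le f_1(y^*) + \langle B_1^*(\bar x_a^k + \sigma(B_1\bar y_a^k + B_2\bar z_a^k - c)) + \cT_1(\bar y_a^k - y_a^k), y^* - \bar y_a^k \rangle + \bar\varepsilon_y^k$ and similarly for $f_2$; adding them, $h(\bar y_a^k,\bar z_a^k) \le \langle \bar x_a^k + \sigma(\dots), B_1(y^* - \bar y_a^k) \rangle + \langle \bar x_a^k, B_2(z^* - \bar z_a^k)\rangle + \langle \cT_1(\bar y_a^k - y_a^k), y^* - \bar y_a^k\rangle + \langle \cT_2(\bar z_a^k - z_a^k), z^* - \bar z_a^k\rangle + \bar\varepsilon_y^k + \bar\varepsilon_z^k$. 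Using $B_1 y^* + B_2 z^* = c$ to combine the first two inner products into $-\langle \bar x_a^k, B_1\bar y_a^k + B_2\bar z_a^k - c\rangle - \sigma\|B_1\bar y_a^k + B_2\bar z_a^k - c\|^2 + \sigma\langle B_1\bar y_a^k + B_2\bar z_a^k - c, B_1 y^* - c + c - B_1\bar y_a^k\rangle$ — I will need to track these terms carefully and write $\bar x_a^k = \bar x_a^k - x^* + x^*$, bounding $\langle x^*, \cdot\rangle$ by the feasibility estimate and the $\bar x_a^k - x^*$ part by Cauchy–Schwarz in the $\cM$-norm. The appearance of $\|x^0 + \sigma B_1 y^0\|$ and $\|B_1 y^*\|$ in the final bound strongly suggests that one should introduce the shifted variable $\hat x^k := x^k + \sigma B_1 y^k$ (which is what Step 2 produces as $\bar x^k$ when $B_1 y^k$ is used), exploit $\bar x_a^k + \sigma B_1 y_a^k = $ a telescoping/averaged quantity whose distance to $x^* + \sigma B_1 y^*$ is again controlled by $R_0$, and absorb the residual $\sigma\|B_1\bar y_a^k + B_2 \bar z_a^k - c\|^2$ term appropriately; I expect the clean route is to show $\|\bar x_a^k + \sigma B_1\bar y_a^k - x^* - \sigma B_1 y^*\| \le$ something like $R_0 + (\text{const})\sqrt\sigma\|B_1 y^*\|$ plus the $\frac{1}{k+1}$-scale telescoping remainder $\|x^0 + \sigma B_1 y^0 - \dots\|^2$.

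The main obstacle will be the upper bound in \eqref{Th:complexity-obj}: unlike the lower bound and unlike \eqref{Th:complexity-subdiff}, it mixes the $\varepsilon$-subgradient slack terms, the quadratic feasibility residual $\sigma\|B_1\bar y_a^k + B_2\bar z_a^k - c\|^2$, the cross terms $\langle \cT_i(\bar w^k - w_a^k), \cdot - \bar w_a^k\rangle$, and the shifted-dual quantities, and getting the \emph{exact} constants $R_0 + 4\sqrt\sigma\|B_1 y^*\|$ and $\|x^0 + \sigma B_1 y^0\|^2$ requires a judicious choice of how to split and regroup. My plan is to first write the upper bound with generic constants, identify which grouping makes the $\frac{1}{2\rho(k+1)}$ and $\frac{1}{2\rho(k+1)}$ coefficients line up (using Proposition \ref{prop: ergodic-rateofdPPA}(a) for the $\frac{2R_0}{\rho(k+1)}$-scale terms and Lemma \ref{lemma:epsilon-sub} for the $\bar\varepsilon$-sum), then verify the telescoping identity $\frac{1}{k+1}\sum_{t=0}^k(\text{something}) = \frac{1}{k+1}(\text{boundary terms})$ that accounts for the $\|x^0 + \sigma B_1 y^0\|^2$ contribution — this is where the specific affine structure of Step 2 (linking $\bar x^t$ to $x^t$ and $B_1 y^t$, not $B_1\bar y^t$) is essential and where a sign or index error is most likely. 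Everything else — the $\cM$-norm block-expansion inequalities, Cauchy–Schwarz, and the triangle inequality $\|w^0 - w^*\|_{\cM} = R_0$ together with $\|B_1 y^* - \dots\|$ bookkeeping — is routine once the grouping is fixed.
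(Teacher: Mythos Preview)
Your treatment of \eqref{Th:complexity-subdiff} and of the lower bound in \eqref{Th:complexity-obj} matches the paper's proof essentially line for line: expand $\|\bar w_a^k - w_a^k\|_{\cM}^2$ via the block structure of $\cM$, use Step~2 averaged to rewrite $\sigma B_1(\bar y_a^k - y_a^k) + (\bar x_a^k - x_a^k) = \sigma(B_1\bar y_a^k + B_2\bar z_a^k - c)$, apply Proposition~\ref{prop: ergodic-rateofdPPA}(a), and feed everything into the two inclusions of Lemma~\ref{lemma:epsilon-sub}. The lower objective bound via the KKT inclusions and the feasibility estimate is also identical.

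For the \emph{upper} bound in \eqref{Th:complexity-obj}, however, your route diverges from the paper's and, as written, will not deliver the stated constants. You propose to evaluate the $\varepsilon$-subgradient inclusions of Lemma~\ref{lemma:epsilon-sub} at $(y^*,z^*)$ and then bound the resulting cross terms $\langle\cT_i(\bar w_a^k - w_a^k),\,w^*-\bar w_a^k\rangle$ and $-\langle\bar x_a^k + \sigma B_1\bar y_a^k - \sigma B_1 y^*,\,B_1\bar y_a^k + B_2\bar z_a^k - c\rangle$. This does give an $O(1/(k+1))$ bound, but the averaged-point product does \emph{not} telescope: writing $B_1\bar y_a^k + B_2\bar z_a^k - c = \frac{1}{\sigma\rho(k+1)}(\Delta_{k+1}-\Delta_0)$ with $\Delta_t = x^t + \sigma B_1 y^t$, the term $\langle\bar x_a^k + \sigma B_1\bar y_a^k,\cdot\rangle$ becomes a product of two averages rather than a sum of per-iterate products, so no boundary term $\|\Delta_0\|^2 = \|x^0+\sigma B_1 y^0\|^2$ emerges. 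Your Cauchy--Schwarz bounds would instead produce constants involving $\|x^*\|$ and extra copies of $R_0^2$, not the ones in \eqref{Th:complexity-obj}.

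The paper avoids this by working \emph{per iterate} and telescoping before averaging. It invokes the one-step estimate (cited as \cite[Lemma~3.6]{sun2024accelerating}, but really just the un-averaged subproblem optimality)
\[
h(\bar y^t,\bar z^t) \le \langle \sigma B_1(y^*-\bar y^t)-\bar x^t,\, B_1\bar y^t + B_2\bar z^t - c\rangle + \langle y^*-\bar y^t,\cT_1(\bar y^t-y^t)\rangle + \langle z^*-\bar z^t,\cT_2(\bar z^t-z^t)\rangle,
\]
then uses $\bar w^t - w^t = \rho^{-1}(w^{t+1}-w^t)$ to rewrite each inner product as a telescoping difference: $\langle y^*-\bar y^t,\cT_1(\bar y^t-y^t)\rangle \le \frac{1}{2\rho}(\|y^t-y^*\|_{\cT_1}^2 - \|y^{t+1}-y^*\|_{\cT_1}^2)$, and with $\Delta_t = x^t+\sigma B_1 y^t$ the first term becomes $\langle\sigma B_1 y^*,\,B_1\bar y^t+B_2\bar z^t-c\rangle - \frac{1}{2\rho}(\|\Delta_{t+1}\|^2-\|\Delta_t\|^2)$ plus a nonpositive remainder (since $\rho\le 2$). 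Summing over $t=0,\dots,k$, dividing by $k+1$, applying convexity $h(\bar y_a^k,\bar z_a^k)\le \frac{1}{k+1}\sum_t h(\bar y^t,\bar z^t)$, and bounding the surviving $\langle\sigma B_1 y^*,\,B_1\bar y_a^k+B_2\bar z_a^k-c\rangle$ via the feasibility estimate yields precisely the constants in \eqref{Th:complexity-obj}. If you want those exact constants, switch to this per-iterate-then-telescope scheme; your averaged $\varepsilon$-subgradient starting point is the wrong end of the computation to attach a telescoping argument to.
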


\begin{proof}
 According to Propositions \ref{prop: ergodic-rateofdPPA} and \ref{prop:equ-PADMM-dPPA}, we have
		$$
            \|\bw_a^{k}-w_a^{k}\|_{\cM}^{2} \leq \frac{4R_{0}^2}{\rho^2(k+1)^2}, \quad \forall k \geq 0. 
            $$
By the definition of $\cM$ in \eqref{def:M}, this can be rewritten as
			\begin{equation}\label{Th:complexity-pADMM-2-ergodic}
				\|\by^{k}_{a}-y^{k}_{a}\|^{2}_{\cT_1}+\frac{1}{\sigma}\|\sigma B_{1}(\by_a^{k}-y_a^{k})+(\bx_a^{k}-x_a^{k})\|^{2}+\|\bz_a^{k}-z_a^{k}\|_{\cT_2}^2 \leq \frac{4R_{0}^2}{\rho^2(k+1)^2}, \ \forall k\geq 0.
			\end{equation}
From Step 2 of Algorithm \ref{alg:pADMM}, we can deduce that for any $k\geq 0$,
			\begin{equation*}
				\begin{array}{ll}
					\|\sigma  B_{1}(\by^{k}_{a}-y^{k}_{a})+(\bx^{k}_{a}-x^{k}_{a})\|&=\|\sigma   B_{1}(\by^{k}_{a}-y^{k}_{a})+\sigma (B_{1}{y}^{k}_{a}+B_{2}\bz^{k}_{a}-c)\|\\
					&=\sigma\| B_{1}{\by}^{k}_{a}+B_{2}\bz^{k}_{a}-c\|,
				\end{array}
			\end{equation*}
which, together with \eqref{Th:complexity-pADMM-2-ergodic}, yields that
			\begin{equation}\label{Th:complexity-pADMM-3-ergodic}
				\| B_{1}{\by}^{k}_{a}+B_{2}\bz^{k}_{a}-c\| \leq \frac{2R_{0}}{\sqrt{\sigma}\rho(k+1)}, \quad \forall k\geq 0.
			\end{equation}
Furthermore, according to the Lemma \ref{lemma:epsilon-sub}, we have for $k\geq 0$,
\begin{equation*}
 \left\{
\begin{array}{ll}
-B_{2}^{*}\bx^{k}_{a}-\cT_2(\bz^{k}_{a}-z^{k}_{a})\in \partial_{\bar{\varepsilon}_{z}^{k}} f_{2}(\bz^{k}_a),\\
-B_{1}^{*}(\bx^{k}_{a}+ \sigma (B_{1}{\by}^{k}_{a}+B_{2}\bz^{k}_{a}-c)) -\cT_1(\by^{k}_{a}-y^{k}_{a})\in \partial_{\bar{\varepsilon}_{y}^{k}} f_{1}(\by^{k}_a), 
\end{array} \right.   
\end{equation*}
which, together with \eqref{Th:complexity-pADMM-2-ergodic} and \eqref{Th:complexity-pADMM-3-ergodic}, implies 
\begin{equation}\label{Th:complexity-pADMM-subdiff-z}
\displaystyle \operatorname{dist}\big(0,\partial_{\bar{\varepsilon}_{z}^{k}}f_2(\bz^k_a)+B_{2}^{*}\bx^{k}_a\big)\leq \|\cT_2(\bz^{k}_{a}-z^{k}_{a})\| \leq  \|\sqrt{\cT_2}\|\|\bz^{k}_{a}-z^{k}_{a}\|_{\cT_2}\leq  \|\sqrt{\cT_2}\|\frac{2R_0}{\rho(k+1)}    
\end{equation}
and 
\begin{equation}\label{Th:complexity-pADMM-subdiff-y}
\begin{array}{ll}
\operatorname{dist}\big(0,\partial_{\bar{\varepsilon}_{y}^{k}}f_1(\by^k_a)+B_{1}^{*}\bx^{k}_a\big) &\displaystyle  \leq  \sigma\|B_{1}^{*}\| \| B_{1}{\by}^{k}_{a}+B_{2}\bz^{k}_{a}-c\|+ \|\cT_1(\by^{k}_{a}-y^{k}_{a})\|  \\
&\displaystyle  \leq \Big(\frac{\sigma\|B_{1}^{*}\|}{\sqrt{\sigma}}+ \|\sqrt{\cT_1}\| \Big) \frac{2R_{0}}{\rho(k+1)}.   
\end{array}  
\end{equation}
Thus, combining \eqref{epsilon-upper-bound}, \eqref{Th:complexity-pADMM-3-ergodic}, \eqref{Th:complexity-pADMM-subdiff-z}, and \eqref{Th:complexity-pADMM-subdiff-y}, we derive the iteration complexity bound in \eqref{Th:complexity-subdiff}.

We now estimate the ergodic iteration complexity results for the objective error. From the KKT conditions in \eqref{eq:KKT}, we have, for any \(k \geq 0\),
\[
f_1(\by^k_a) - f_1(y^*) \geq \langle -B_1^* x^*, \by^k_a - y^* \rangle \text{  and  }  f_2(\bz^k_a) - f_2(z^*) \geq \langle -B_2^* x^*, \bz^k_a - z^* \rangle.
\]
Thus, it follows from \eqref{Th:complexity-pADMM-3-ergodic} that for all \(k \geq 0\),
\[
\begin{array}{ll}
    h(\by^k_a, \bz^k_a)  \geq \langle B_1 \by^k_a + B_2 \bz^k_a - c, -x^* \rangle \geq -\|x^*\| \|B_1 \by^k_a + B_2 \bz^k_a - c\|  \geq \displaystyle -\frac{2R_0 \|x^*\|}{\sqrt{\sigma} \rho(k+1)}.
\end{array}
\]
For the upper bound of the objective error, from \cite[Lemma 3.6]{sun2024accelerating}, we first have the following upper bounds:
		\begin{equation}\label{obj-upperbound}
			\begin{array}{ll}
				h(\by^k,\bz^k) & \displaystyle \leq  \langle\sigma B_1( y^*-\by^k)-\bx^k,  B_1 \by^k+B_2\bz^k-c  \rangle\\
				&\displaystyle  \quad + \langle y^*-\by^k,  \cT_1(\by^k-y^k)\rangle +\langle z^*-\bz^k , \cT_2(\bz^k-z^k)\rangle.
			\end{array}
		\end{equation}
Note that from Step 4 of Algorithm \ref{alg:pADMM} and $\rho\in (0,2]$,  we have for any $k\geq 0$,
\begin{equation}\label{Th:complexity-pADMM-4-ergodic}
			\begin{array}{ll}
			 & \displaystyle \langle y^*-\by^k,  \cT_1(\by^k-y^k)\rangle=  \big\langle y^*-(y^k+\frac{y^{k+1}-y^{k}}{\rho}),  \cT_1(\frac{y^{k+1}-y^{k}}{\rho})\big \rangle\\
    =& \displaystyle \frac{1}{2\rho}\big(\|y^k-y^*\|_{\cT_1}^2-\|y^{k+1}-y^*\|_{\cT_1}^2 \big)+\frac{\rho-2}{2\rho^2}\|y^{k+1}-y^{k}\|_{\cT_1}^2\\
    \leq& \displaystyle   \frac{1}{2\rho}\big(\|y^k-y^*\|_{\cT_1}^2-\|y^{k+1}-y^*\|_{\cT_1}^2\big). 
			\end{array}
		\end{equation}
Similarly, we also have 
   \begin{equation}\label{Th:complexity-pADMM-5-ergodic}
			\begin{array}{ll}
			 \left\langle z^*-\bz^k,  \cT_2(\bz^k-z^k)\right\rangle \leq  \frac{1}{2\rho} \big(\|z^k-z^*\|_{\cT_1}^2-\|z^{k+1}-z^*\|_{\cT_1}^2\big),\quad  \forall k\geq 0.
			\end{array}
		\end{equation}
Additionally, for ease of notation, we define
$$
\Delta_k:=x^k+\sigma B_{1}y^k,\quad  \forall k\geq 0.
$$
From Step 4 of Algorithm \ref{alg:pADMM} and $\rho\in (0,2]$, we can derive that for any $k\geq 0$,
 \begin{equation}\label{Th:complexity-pADMM-6-ergodic}
			\begin{array}{ll}
			 & \displaystyle \langle\sigma B_1( y^*-\by^k)-\bx^k,  B_1 \by^k+B_2\bz^k-c  \rangle\\
     = &\displaystyle  \langle\sigma B_1 y^*,  B_1 \by^k+B_2\bz^k-c  \rangle-\langle \bx^k+\sigma B_1\by^k,  B_1 \by^k+B_2\bz^k-c \rangle\\
     = &\displaystyle \langle\sigma B_1 y^*,  B_1 \by^k+B_2\bz^k-c  \rangle-\big\langle \Delta_{k}+\frac{\Delta_{k+1}-\Delta_{k}}{\rho}, \frac{\Delta_{k+1}-\Delta_{k}}{\rho}\big \rangle\\
     =&\displaystyle \langle\sigma B_1 y^*,  B_1 \by^k+B_2\bz^k-c \rangle-\frac{1}{2\rho}\big(\|\Delta_{k+1}\|^2-\|\Delta_{k}\|^2\big)+\frac{\rho-2}{2\rho^2}\|\Delta_{k+1}\|^2\\
     \leq &\displaystyle   \langle\sigma B_1 y^*,  B_1 \by^k+B_2\bz^k-c \rangle-\frac{1}{2\rho}\big(\|\Delta_{k+1}\|^2-\|\Delta_{k}\|^2\big).
			\end{array}
		\end{equation}
Thus, combing with \eqref{obj-upperbound}, \eqref{Th:complexity-pADMM-4-ergodic}, \eqref{Th:complexity-pADMM-5-ergodic}, and \eqref{Th:complexity-pADMM-6-ergodic}, we conclude  that for all $k\geq 0$,
			\begin{equation*}
				\begin{array}{ll}
					h(\by^k,\bz^k) & \displaystyle \leq   \frac{1}{2\rho}(\|y^k-y^*\|_{\cT_1}^2-\|y^{k+1}-y^*\|_{\cT_1}^2) +\frac{1}{2\rho}(\|z^k-z^*\|_{\cT_1}^2-\|z^{k+1}-z^*\|_{\cT_1}^2) \\
     & \displaystyle \quad + \langle\sigma B_1 y^*,  B_1 \by^k+B_2\bz^k-c  \rangle-\frac{1}{2\rho}\big(\|\Delta_{k+1}\|^2-\|\Delta_{k}\|^2\big).
				\end{array}
			\end{equation*}
It follows from the convexity of $h(\cdot)$  and \eqref{Th:complexity-pADMM-3-ergodic} that for any $k\geq 0$,
$$
\begin{array}{ll}
  &	\displaystyle h(\by^k_{a},\bz^k_{a})
    \leq    \frac{1}{k+1}\sum_{t=0}^{k} h(\by^t,\bz^t) \\
 \leq & \displaystyle \frac{1}{2\rho(k+1)}\big(\|y^{0}-y^{*}\|_{\cT_1}^2+\|z^{0}-z^{*}\|_{\cT_2}^2\big) + \big\langle\sigma B_1 y^*,  B_1 \by^k_a+B_2\bz^k_a-c \rangle+\frac{1}{2\rho(k+1)}\|\Delta_{0}\|^2\\
\leq  &\displaystyle \frac{R_{0}^2}{2\rho(k+1)}
  + \sqrt{\sigma}\| B_1 y^*\|\frac{2R_{0}}{\rho(k+1)}+\frac{1}{2\rho(k+1)}\|x^{0}+\sigma B_{1}y^{0}\|^2.
 \end{array}
$$
This completes the proof.			
\end{proof}

\begin{remark}
From the ergodic iteration complexity results in \eqref{Th:complexity-subdiff} and \eqref{Th:complexity-obj}, the ergodic sequence of the PR splitting method with semi-proximal terms has a worst-case upper bound that is half that of the DR splitting method with semi-proximal terms. This provides theoretical support for the superior performance of the ergodic PR splitting method with semi-proximal terms compared to the ergodic DR splitting method with semi-proximal terms, as demonstrated in the numerical experiments in Section \ref{sec:4}.
\end{remark}

\begin{remark}
We summarize some existing complexity results for ergodic sequences of ADMM-type algorithms closely related to our work in Table \ref{tab:complexity}. 
For more results, one can refer to \cite{adona2019iteration,chambolle2016ergodic} and the references in. 
\begin{table}[H]
\centering
\caption{\centering Ergodic iteration complexity results of ADMM-type algorithms}\label{tab:complexity}
\renewcommand{\arraystretch}{1.5} % Adjust the row height
\adjustbox{max width=\textwidth}{%
 \begin{tabular}{cccccccc}
 \toprule
Paper & Algorithm & Proximal Operators & $\rho$ & \begin{tabular}[c]{@{}c@{}}Dual\\ Step-size\end{tabular} & \begin{tabular}[c]{@{}c@{}}Feasibility\\ Violation\end{tabular} & \begin{tabular}[c]{@{}c@{}}Objective\\ Error\end{tabular} & \begin{tabular}[c]{@{}c@{}}KKT\\ Residual\end{tabular} \\ 
\midrule
\cite{monteiro2013iteration} & ADMM & $\cT_1 = 0, \cT_2 = 0$ & 1 & 1 & $O(1/k)$ & - & $O_\varepsilon(1/k)$\tablefootnote{$O_{\varepsilon}(1/k)$ of the KKT residual: an $O(1/k)$ iteration complexity of the KKT residual based on $\varepsilon$-subdifferential in \eqref{Th:complexity-subdiff}.}\\
\cite{davis2016convergence} & GADMM & $\cT_1 = 0, \cT_2 = 0$ & $(0, 2]$ & 1 & $O(1/k)$ & $O(1/k)$ & - \\
\cite{cui2016convergence} & \begin{tabular}[c]{@{}c@{}}(Majorized)\\ sPADMM\end{tabular} & $\cT_1 \succeq 0, \cT_2 \succeq 0$ & 1 & $(0, \frac{1+\sqrt{5}}{2})$ & $O(1/k)$ & $O(1/k)$ & - \\
\cite{shen2016weighted} & sPADMM & $\cT_1 \succeq 0, \cT_2 \succeq 0$ & 1 & $(0, \frac{1+\sqrt{5}}{2})$ & $O(1/k)$ & - & $O_\varepsilon(1/k)$ \\
{Ours} & {pADMM} & {$\cT_1 \succeq 0, \cT_2 \succeq 0$} & {$(0, 2]$} & {1} & {$O(1/k)$} & {$O(1/k)$} & {$O_\varepsilon(1/k)$} \\  \bottomrule
\end{tabular}%
}

\end{table}

When \(\mathcal{T}_1 = 0\) and \(\mathcal{T}_2 = 0\), pADMM with \(\rho = 2\) reduces to the PR splitting method, which corresponds to the GADMM \cite{eckstein1992douglas} with \(\rho = 2\). According to the results of Davis and Yin \cite{davis2016convergence}, the ergodic sequence of the PR splitting method achieves an \(O(1/k)\) complexity in terms of the objective error and the feasibility violation. In contrast, we establish the ergodic iteration complexity result for the PR splitting method with semi-proximal terms with respect to the objective error, the feasibility violation, and the KKT residual based on $\varepsilon$-subdifferential. This generalization is particularly significant, as carefully chosen \(\mathcal{T}_1\) and \(\mathcal{T}_2\) can simplify the solution of subproblems in some important convex optimization problems, such as general LPs.
\end{remark}

\end{document}